\documentclass{amsart}
\usepackage{amsmath} 
\usepackage{amscd}
\usepackage{amsthm}
\usepackage{amsbsy}
\usepackage[margin=1in]{geometry}
\usepackage{commath, longtable}
\usepackage{comment, enumerate}
\usepackage{geometry}
\usepackage[matrix,arrow]{xy}
\usepackage{mathrsfs}
\usepackage{color}
\usepackage{float}
\usepackage{mathtools,caption}
\usepackage[table,dvipsnames]{xcolor}
\usepackage{tikz-cd}
\usepackage{longtable}
\usepackage[utf8]{inputenc}
\usepackage[OT2,T1]{fontenc}
\usepackage[normalem]{ulem}
\usepackage{hyperref}
\usepackage[customcolors]{hf-tikz}
\usepackage{enumitem}
\newlist{primenumerate}{enumerate}{1}
\setlist[primenumerate,1]{label={\arabic*$'$}}
\hypersetup{
 colorlinks=true,
 linkcolor=RoyalBlue,
 filecolor=blue,
 citecolor=Bittersweet,
 urlcolor=orange,
 pdftitle={Jha-Kundu-Laxmi-Washington project},
 }
\usepackage{booktabs}
\usepackage{latexsym,amssymb}
\usepackage{amssymb}
\usepackage{graphicx}
\usepackage{ textcomp }
\usepackage{stmaryrd}
\usepackage{tikz}
\usepackage{tikzpagenodes} 
\usepackage{multicol}
\usepackage[english]{babel} 
\usepackage{blindtext}
\usepackage[new]{old-arrows}

\definecolor{grn}{rgb}{0,0.6,0}
\definecolor{mrn}{rgb}{0.3,0,0}
\definecolor{blue}{rgb}{0,0,0.7}
\definecolor{Mygray}{rgb}{0.75,0.75,0.75}
\definecolor{auburn}{rgb}{0.43, 0.21, 0.1}
\definecolor{britishracinggreen}{rgb}{0.0, 0.26, 0.15}
\definecolor{taupe}{rgb}{0.28, 0.24, 0.2}

\newtheorem{theorem}{Theorem}[section]
\newtheorem*{theorem*}{Theorem}
\newtheorem{propn}[theorem]{Proposition}
\newtheorem{cor}[theorem]{Corollary}

\newtheorem{lthm}{Theorem}

\newtheorem{heur}[theorem]{Heuristic}
\newtheorem{lheur}{Heuristic}

\theoremstyle{remark}
\newtheorem{rmk}[theorem]{Remark}

\newcommand{\Nc}{\mathcal{N}}
\newcommand{\Nm}{\mathsf{N}}
\newcommand{\Z}{\mathbb{Z}}
\newcommand{\Q}{\mathbb{Q}}
\newcommand{\F}{\mathbb{F}}

\renewcommand{\exp}{\operatorname{exp}}

\newcommand{\Gal}{\operatorname{Gal}}
\newcommand{\rank}{\operatorname{rk}}

\newcommand{\Cl}{\operatorname{Cl}}
\newcommand{\Wt}{\operatorname{Wt}}
\newcommand{\Aut}{\operatorname{Aut}}

\title[]{$p$-class groups in  the cyclotomic $\Z_p$-extension of imaginary quadratic fields}

\author[S.~Jha]{Somnath Jha}
\address[Jha]{Department of Mathematics, Indian Institute of Technology Kanpur, Uttar Pradesh, India}
\email{jhasom@iitk.ac.in}

\author[D.~Kundu]{Debanjana Kundu}
\address[Kundu]{Department of Mathematics \& Statistics\\ University of Regina, Saskatchewan, Canada}
\email{debanjana.kundu@uregina.ca}

\author[H.~Laxmi]{H.~Laxmi}
\address[Laxmi]{Department of Mathematics, Indian Institute of Technology Kanpur, Uttar Pradesh, India}
\email{hlaxmi@iitk.ac.in, 67laxmi9@gmail.com}

\author[L.~C.~Washington]{Lawrence C.~Washington}
\address[Washington]{Department of Mathematics\\
University of Maryland, College Park, Maryland, USA}
\email{lcw@umd.edu}

\begin{document}
\baselineskip=14.5pt

\subjclass{Primary: 11R29, Secondary: 11R11, 11R23.}

\keywords{$p$-class groups, Iwasawa invariants, structure of Iwasawa module.}

\renewcommand{\thefootnote}{\arabic{footnote}}
\setcounter{footnote}{0}

\begin{abstract}
We study the structure of the $p$-class group of the first layer of the cyclotomic $\Z_p$-extension of an imaginary quadratic field where $p$ is non-split.
We obtain restrictions on the growth of the cyclic factors from the base layer to the first layer; in particular, when the $p$-rank remains unchanged, each cyclic factor grows by exactly one power of $p$.
We show that in a large number of cases the Iwasawa $\lambda$-invariant is completely determined by the $p$-rank of the first layer itself.
Finally we use a Cohen--Lenstra--Martinet type philosophy to study the probability distribution of Iwasawa lambda invariants of the first layer when the base layer has cyclic $p$-class group.
\end{abstract}

\maketitle

\section{Introduction}

\subsection{Background}
A main goal in classical Iwasawa theory is to study the asymptotic growth of the $p$-primary parts of class groups in a $\Z_p$-extension of a number field.
For the cyclotomic $\Z_p$-extension of an abelian number field (over $\Q$), the vanishing of the $\mu$-invariant was proved in \cite{ferrero-washington}.
In particular, for the imaginary quadratic fields considered in this paper, the eventual growth of the $p$-class groups in the cyclotomic tower is governed by the $\lambda$-invariant.
There is considerable interest in understanding how much of this asymptotic behaviour is already visible in the initial layers.
For example, J.~Sands \cite{sands} studied small Iwasawa invariants for imaginary quadratic fields and obtained explicit information on class-number growth in the first layers in situations where $\lambda$ is small.

Recently, the second and the fourth-named author \cite{kw_rms} initiated a systematic study of the structure of the $p$-class group at the first layer of the cyclotomic $\Z_p$-extension of an imaginary quadratic field $k_0$.
Assuming that $p$ is non-split and $A_0 = A(k_0)$ is cyclic, they determined the possible structures of $A_1 = A(k_1)$ and used this description to compare the heuristics of Cohen--Lenstra--Martinet for class groups with their naive heuristics for the Iwasawa $\lambda$-invariant (see Section~8 of \textit{loc.~cit.}).
This suggests several natural questions beyond the cyclic case: how does the structure of $A_0$ constrain that of $A_1$?
How are changes in the $p$-rank reflected in the growth of the invariant factors and exponent?
To what extent can the $p$-rank of the first-layer determine the cyclotomic Iwasawa $\lambda$-invariant?
Do the resulting first-layer distributions remain compatible with the existing heuristics for $\lambda$ given in \cite{EJV} or the naive heuristics in \cite{kw_rms}?

In this paper, we show that many of these questions are governed by the mod-$p$ Galois-module structure of the first-layer class group.
More precisely, the Jordan decomposition of $A_1/pA_1$ under the action of $\Gal(k_1/k_0)$ often controls the growth of the $p$-rank, the exponent, and the cyclic factors of $A_1$, and in a large range determines the Iwasawa $\lambda$-invariant.

\subsection{Main Results and Proof Ideas}

We highlight the main results we prove in this paper.
Throughout this paper, $k_0$ is an imaginary quadratic field in which $p$ is non-split.
Some of the results we prove are stronger than what is recorded in this introduction to avoid notational clutter.

\subsubsection{Structural Results}
We begin by recording the main structural consequences of the mod-$p$ Galois action on the first-layer class group.

\begin{lthm}[Rank Growth]
\label{ThmA}
Let $k_0$ be an imaginary quadratic field.
Let $p$ be an odd prime that is non-split in $k_0$.
Let $A_0$ (resp.~$A_1$) be the $p$-Sylow subgroup of the class group of $k_0$ (resp.~$k_1$).
Let $c$ denote the dimension of the subspace of $A_1/pA_1$ generated by the images of classes lifted from $A_0$.
Then 
\[
\rank_p A_0 +(p-1)c \le \rank_p A_1  \le p \rank_p A_0.
\]
In particular, at most $\lfloor (\rank_p A_1 - \rank_p A_0)/(p-1)\rfloor$ cyclic factors of $A_0$ fail to grow.
\end{lthm}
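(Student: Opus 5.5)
Let $G=\Gal(k_1/k_0)=\langle\sigma\rangle$, cyclic of order $p$, and put $T=\sigma-1$. Since $\Z_p[G]/(p)\cong\F_p[T]/(T^p)$ is local, the $G$-module $V=A_1/pA_1$ decomposes into Jordan blocks $\F_p[T]/(T^{e_i})$ with $1\le e_i\le p$. The number of blocks equals $\dim V^G$ and also $\dim V/TV$. The plan is to relate $V/TV$ and the image of $A_0$ to $A_0$ itself using the standard facts for this tower. Because $p$ is non-split in $k_0$, exactly one prime ramifies in $k_1/k_0$, and it is totally ramified. So the norm $N\colon A_1\to A_0$ is surjective (Hilbert class field disjointness), and its kernel is $TA_1$, i.e. $A_1/TA_1\cong A_0$ (Iwasawa's lemma for a single totally ramified prime). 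Also the lift $j\colon A_0\to A_1$ satisfies $N\circ j=p$ and $j\circ N=\nu=1+\sigma+\dots+\sigma^{p-1}$.

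\textbf{Upper bound.} From $A_1/TA_1\cong A_0$ we get $V/TV\cong A_1/(pA_1+TA_1)\cong A_0/pA_0$. Hence the number of Jordan blocks is $r_0:=\rank_p A_0$. Each block has dimension at most $p$, so $\rank_p A_1=\dim V=\sum e_i\le p\,r_0$.

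\textbf{Lower bound.} In $\F_p[T]/(T^p)$ we have $\nu\equiv T^{p-1}$ mod $p$, since $\nu=((1+T)^p-1)/T$. The image $W$ of $j(A_0)$ in $V$ equals the image of $jN(A_1)=\nu A_1$, because $N$ is surjective. Thus $W=T^{p-1}V$, and its dimension $c$ is exactly the number of blocks with $e_i=p$. So
\[\dim V=\sum e_i\ \ge\ p\,c+(r_0-c)=r_0+(p-1)c,\]
since every block has $e_i\ge1$. Identifying $W$ with $T^{p-1}V$ is the key point. It relies on the surjectivity of $N$ and the congruence $\nu\equiv T^{p-1}$. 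I expect the main care to be needed in checking that $A_1/TA_1\cong A_0$ holds integrally (not just after tensoring), using that the unique ramified prime is totally ramified.

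\textbf{The ``in particular''.} For a cyclic factor $\langle a\rangle$ of $A_0$, take $b\in A_1$ with $N(b)=a$. Then $j(a)=\nu b$, and if $\nu b\notin pA_1$ the class $j(a)$ has order $|a|$ and is not a $p$-th power in $A_1$. We want to show that a factor which fails to grow must contribute to $c$. Concretely, use the block decomposition: choose $b_i$ generating the Jordan blocks modulo $p$, so that the $N(b_i)$ form a minimal generating set of $A_0$. Blocks with $e_i<p$ satisfy $\nu b_i\in pA_1$, so $j(N b_i)=p\,y$ for some $y$. Then $p\,N(y)=N j N b_i=p\,N b_i$, and the factor generated by $N b_i$ grows: the $p$-power exponent of the corresponding cyclic factor of $A_1$ exceeds that in $A_0$. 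This step needs care when the generators are adapted to the invariant-factor decomposition, and I would handle it by induction on exponents. The conclusion is that the factors failing to grow number at most $c\le\lfloor(\rank_p A_1-r_0)/(p-1)\rfloor$.
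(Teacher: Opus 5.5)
Your proof of the two inequalities is correct and is essentially the paper's proof of Theorem~\ref{thm 1 LCW write-up}. The isomorphism $\overline{A}_1/T\overline{A}_1\cong\overline{A}_0$ fixes the number of Jordan blocks at $r_0$, and $\overline{\Nm}=T^{p-1}$ together with $\Nm(A_1)=j(A_0)$ identifies $c$ with the number of blocks of length $p$. The paper derives $\ker\Nc=TA_1$ from Chevalley's formula via $\#A_1^{G}=\#A_0$ rather than quoting it; that difference is immaterial.

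The gap is the \emph{in particular} clause. ``Fails to grow'' must be a statement about invariant factors; in the paper it means the $i$-th sorted invariant factors satisfy $e_i=f_i$. There is no canonical matching between cyclic factors of $A_0$ and of $A_1$, so ``the corresponding cyclic factor of $A_1$'' has no meaning as written.
\begin{itemize}
\item Your identity $pN(y)=pNb_i$ only says $N(y)-Nb_i\in A_0[p]$, and carries no growth information. What $j(Nb_i)=py$ really gives, by injectivity of $j$, is an \emph{element} $y$ of order $p\cdot\operatorname{ord}(Nb_i)$. That is not yet a statement about invariant factors.
\item The $Nb_i$ obtained from an arbitrary Jordan basis are only a minimal generating set of $A_0$, not necessarily a basis of a cyclic decomposition. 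For example, in $\Z/p^2\Z\oplus\Z/p\Z=\langle x\rangle\oplus\langle y\rangle$, every pair of lifts of the residues $\bar x$, $\bar x+\bar y$ consists of two elements of order $p^2$, so they cannot split the group. Thus ``the factor generated by $Nb_i$'' need not be a factor.
\item Even with a genuine basis, you would still need a lemma comparing the invariant factors of a subgroup with those of the ambient group. ``Induction on exponents'' is a placeholder for exactly that.
\end{itemize}

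The missing ingredient is Proposition~\ref{prop growth cyclic factors}. It rests on monotonicity of sorted invariant factors under inclusion: if $H'\subseteq H$ then $h'_i\le h_i$ for all $i$. This holds because $\#\{i: h_i>k\}=\rank_p(p^kH)$ and $(p^kH')[p]\subseteq(p^kH)[p]$. The argument then runs as follows.
\begin{enumerate}
\item Put $B=j(A_0)\cap pA_1$. Then $j(A_0)/B\cong\overline{j}(\overline{A}_0)$, so $B$ has index $p^c$ in $j(A_0)$.
\item From $pj(A_0)\subseteq B\subseteq j(A_0)$, the sorted invariant factors of $B$ are $e_i-\delta_i$ with $\delta_i\in\{0,1\}$ and $\sum_i\delta_i=c$.
\item From $B\subseteq pA_1$, these invariant factors are at most $f_i-1$.
\item Since $e_i\le f_i$ (from $j(A_0)\subseteq A_1$), each $i$ with $e_i=f_i$ forces $\delta_i=1$. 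Hence there are at most $c\le (r_1-r_0)/(p-1)$ such $i$.
\end{enumerate}
This uses only $\dim\overline{j}(\overline{A}_0)=c$; the Jordan-adapted generators play no role.
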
 

Another interpretation of the quantity $c$ in Theorem~\ref{ThmA} that becomes more apparent from the proof is
\[
c = \#\{\text{Jordan blocks of } A_1/pA_1 \text{ of length }p\}.
\]

Theorem A has many nice consequences, all under the same assumptions as Theorem A.
Let 
\[
A_0 \simeq \bigoplus_{i=1}^{r_0} \Z/p^{e_i}\Z \text{ where } e_1\geq e_2 \geq \ldots \geq e_{r_0} \geq 1.
\]

\begin{lthm}\emph{[Theorem~\ref{thm 4.7}]}
If $\rank_p A_0 = \rank_p A_1=r_0$, then $A_1 \cong \bigoplus_{i}^{r_0} \Z/p^{e_i +1}\Z$.
\end{lthm}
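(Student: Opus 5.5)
The plan is to work with $G=\Gal(k_1/k_0)=\langle\sigma\rangle$, $T=\sigma-1$ and $\nu=1+\sigma+\dots+\sigma^{p-1}$. I would show that $pA_1$ is the image of the lifting map $i\colon A_0\to A_1$, which is isomorphic to $A_0$. Together with $\rank_p A_1=r_0$, this forces $A_1\cong\bigoplus_i \Z/p^{e_i+1}\Z$: a finite abelian $p$-group of rank $r_0$ with $pA_1\cong\bigoplus_i\Z/p^{e_i}\Z$, all $e_i\ge1$, must have exactly these invariants.

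\emph{Step 1: standard facts.} Since $p$ is non-split in $k_0$, the unique prime above $p$ is totally ramified in $k_1/k_0$ and no other prime ramifies. Hence the norm $N\colon A_1\to A_0$ is surjective, and we have $i\circ N=\nu$ on $A_1$ and $N\circ i=p$ on $A_0$. I would also use injectivity of $i$, which should be established earlier in the paper (it is implicit in Theorem~\ref{ThmA}) or is the standard non-capitulation fact for imaginary quadratic fields and $p$ odd. This is the step I would have to check most carefully, since everything rests on it.

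From these facts, $\nu A_1=i(NA_1)=i(A_0)\cong A_0$. Chevalley's ambiguous class number formula, with one ramified prime and global units $\pm1$ that are norms, gives $|A_1^G|=|A_0|$. Since $\nu A_1\subseteq A_1^G$ and both have order $|A_0|$, we get $A_1^G=\nu A_1$. Then $\hat H^0(G,A_1)=0$, so $\hat H^{-1}(G,A_1)=0$ because $A_1$ is finite and the Herbrand quotient is $1$. Hence $\ker N=TA_1$ and $A_1/TA_1\cong A_0$.

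\emph{Step 2: $TA_1\subseteq pA_1$.} From Step 1, $\dim A_1/(p,T)A_1=\dim A_0/pA_0=r_0$. The hypothesis gives $\dim A_1/pA_1=r_0$, so $(p,T)A_1=pA_1$, that is, $TA_1\subseteq pA_1$. Equivalently, $c=0$ in Theorem~\ref{ThmA}.

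\emph{Step 3: the main computation.} In $\Z_p[T]$ we have $\nu=\sum_{k=1}^{p}\binom{p}{k}T^{k-1}=p(1+Tw(T))+T^{p-1}$ for some polynomial $w$. Using Step 2 repeatedly,
\[
T^{p-1}A_1\subseteq pT^{p-2}A_1\subseteq pTA_1\quad(p\ge3).
\]
Because $T$ is nilpotent on the finite $p$-group $A_1$, the element $1+Tw$ acts invertibly, so $p(1+Tw)A_1=pA_1$. Therefore $pA_1=\nu A_1+pTA_1=i(A_0)+T(pA_1)$. Since $T$ is nilpotent on $pA_1$, Nakayama's lemma gives $pA_1=i(A_0)\cong A_0$, which is the claim.

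I expect the main obstacle to be justifying the inputs of Step 1, namely the injectivity of $i$ and the identification $A_1^G=i(A_0)$. Once these are in hand, the rest is the short computation above.
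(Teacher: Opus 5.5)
Your proof is correct, and it takes a genuinely different route from the paper's. The inputs of your Step~1 are the same as the paper's. Injectivity of $j$ (your $i$) is quoted from Washington's book. The isomorphism $A_1/TA_1\cong A_0$ is Proposition~\ref{Ker N = A1^{sigma -1}}, which the paper proves by counting with $0\to A_1^{G}\to A_1\to TA_1\to 0$ rather than via the Herbrand quotient.

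After that, the two proofs diverge. The paper works on the kernel side. Rank stability gives $j(A_0[p])=A_1[p]$, so $\sigma$ is trivial on $A_1[p]$. Proposition~\ref{Prop 1} then asserts $\sigma=1+p\psi$ with $\psi\in\operatorname{End}(A_1)$, and Proposition~\ref{Prop 2} deduces $\Nm=p(1+p\theta)$ with $1+p\theta$ invertible. Hence $\ker\Nc=A_1[p]$ and $A_0\cong A_1/A_1[p]\cong pA_1$.

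You work on the image side. Your Step~2 says every Jordan block in \eqref{modp} has length $1$, i.e.\ $TA_1\subseteq pA_1$. Then the expansion $\Nm=p(1+Tw)+T^{p-1}$ of the norm element (your $\nu$), together with Nakayama, gives $pA_1=\Nm A_1=j(A_0)$. The two key lemmas are Pontryagin duals: $\sigma=1$ on $A[p]\Rightarrow\ker\Nm=A[p]$, versus $TA\subseteq pA\Rightarrow\Nm A=pA$. Yours is coordinate-free, a few lines long, and reuses the Jordan-block analysis of Theorem~\ref{thm 1 LCW write-up}. One small slip: $TA_1\subseteq pA_1$ is stronger than $c=0$, not equivalent to it, though both hold under $r_1=r_0$.

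Your route also sidesteps a genuine error in the paper's argument: Proposition~\ref{Prop 1} is false as stated. Take $A=\Z/p^2\Z\oplus\Z/p\Z$ and the automorphism $\sigma(a,b)=(a,\,b+(a \bmod p))$. It has order $p$ and is trivial on $A[p]=p\Z/p^2\Z\oplus\Z/p\Z$. Yet $(\sigma-1)(1,0)=(0,1)\notin pA$, so $\sigma-1\notin p\operatorname{End}(A)$.

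The faulty step is the final comparison with \eqref{4}. Its $(i,j)$ entry holds only modulo $p^{g_i}$, so it yields $p\mid m_{ij}$ only when $g_i\ge 2$.

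The conclusion of Proposition~\ref{Prop 2} is nevertheless true, since dualizing your Nakayama argument proves it directly. In the setting of Theorem~\ref{thm 4.7}, the factorization $\sigma=1+p\psi$ does hold, but only because of the extra information in your Step~2. There, $TA_1\subseteq pA_1$ means $p\mid m_{ij}$ for all $i,j$, and combined with \eqref{1} this makes $(m_{ij}/p)$ an endomorphism. So your argument is not merely shorter: it supplies exactly the ingredient the paper's proof is missing.
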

In fact, by \cite[Theorem~1(b)]{fukuda}, when $\rank_p A_0 = \rank_p A_1$, the rank stabilizes in the cyclotomic tower.
By a Nakayama's Lemma argument, this rank equals the Iwasawa $\lambda$-invariant.
So we see the predicted growth in the first level of the $\Z_p$-extension.

\begin{lthm}\emph{[Theorem~\ref{rank lambda equal no cyclic hypothesis}]}
If $\rank_{p} A_1 <p$ then $\rank_p A_1 = \lambda$.
\end{lthm}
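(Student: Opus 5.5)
We work with the Iwasawa module $X=\varprojlim A_n$ over $\Lambda=\Z_p[[T]]$, where $T=\gamma-1$ for a topological generator $\gamma$ of $\Gal(k_\infty/k_0)$. The idea is to identify $\rank_p A_1$ with the length of a truncated $\F_p[T]$-module built from $X$, and then read off $\lambda$ directly. The argument has three steps.

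\emph{Step 1: $A_n$ as a quotient of $X$.} Since $p$ is non-split in $k_0$, there is a unique prime of $k_0$ above $p$. It is totally ramified in $k_\infty/k_0$, because $\Q_\infty/\Q$ is totally ramified at $p$ and $k_0\cap\Q_\infty=\Q$. Hence $k_\infty/k_0$ has exactly one ramified prime, totally ramified from level $0$. The standard computation (as in Washington's book, Prop.~13.22, with $e=0$ and $Y_0=TX$) then gives
\[
A_n\cong X/\omega_nX,\qquad \omega_n=(1+T)^{p^n}-1 .
\]
In particular $A_0\cong X/TX$ and $A_1\cong X/\omega_1X$. Since $\omega_1\equiv T^p \pmod p$, we get
\[
A_1/pA_1\cong X/(p,T^p)X .
\]

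\emph{Step 2: $X$ is free over $\Z_p$ of rank $\lambda$.} By Ferrero--Washington \cite{ferrero-washington}, $\mu=0$, so $X$ is a finitely generated $\Z_p$-module of rank $\lambda$. Its $\Z_p$-torsion is finite and $\Lambda$-stable, so it is a finite $\Lambda$-submodule. The plus part of $X$ vanishes, since $\Q$ has trivial $p$-class groups along $\Q_\infty$ and $p$ is odd. For the minus part of the cyclotomic $\Z_p$-extension of a CM field, the classical result is that $X^-$ has no nonzero finite $\Lambda$-submodules. Therefore $X\cong\Z_p^{\lambda}$, and $M:=X/pX$ is an $\F_p$-vector space of dimension exactly $\lambda$.

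\emph{Step 3: counting in $M$.} $M$ is a finitely generated torsion $\F_p[[T]]$-module of finite dimension, and $T$ acts nilpotently on it. So
\[
M\cong\bigoplus_{i}\F_p[T]/(T^{a_i}),\qquad \sum_i a_i=\lambda .
\]
By Step 1,
\[
\rank_p A_1=\dim M/T^pM=\sum_i\min(a_i,p).
\]
Suppose $\rank_p A_1<p$. If some $a_i\ge p$, that single summand would already contribute $p$, which is impossible. So every $a_i<p$, hence $\min(a_i,p)=a_i$ for all $i$, and $\rank_p A_1=\sum_i a_i=\lambda$. As a sanity check, the summands with $a_i=p$ are exactly the length-$p$ Jordan blocks counted by $c$ in Theorem~\ref{ThmA}.

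The main obstacle is Step 2, the absence of nonzero finite submodules. Without it, $\dim X/pX$ could exceed $\lambda$ by the rank of the torsion part, and the count in Step 3 would only give $\rank_p A_1\ge\lambda$. I would handle this by citing the known minus-part result. Alternatively, one can argue directly: a nonzero finite submodule would force $\rank_p A_n$ to exceed $\lambda$ for all large $n$, which contradicts the injectivity of the maps $A_n^-\to A_m^-$ for CM fields with $p$ odd, since that injectivity forces the $A_n$ to eventually look like $X/\omega_nX$ with $X$ torsion-free.
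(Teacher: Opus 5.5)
Your proposal is correct and follows the paper's own proof essentially step for step. The paper uses $\mu=0$ and the absence of nonzero finite submodules to get $X\cong\Z_p^{\lambda}$, decomposes $X/pX\cong\bigoplus_i\F_p[T]/(T^{a_i})$, and reads off $\rank_pA_n=\sum_i\min(a_i,p^n)$ from $A_n/pA_n\cong \overline{X}/T^{p^n}\overline{X}$. It also notes that there are exactly $r_0$ summands (because $X/TX\cong A_0$), which gives its sharper hypothesis $\rank_pA_n<r_0-1+p^n$; that count is not needed for the bound $\rank_pA_1<p$.

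One small slip in your side remark: the length-$p$ Jordan blocks of $\overline{A}_1$ correspond to the summands with $a_i\ge p$, not only those with $a_i=p$.
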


\begin{lthm}\emph{[Corollary~\ref{cor 5-7}]}
If $A_0$ is cyclic, then $\rank_p A_1 = \min\{p, \lambda\}$.
\end{lthm}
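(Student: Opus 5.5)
We derive this statement from the structural results already in hand, namely Theorem~A, Theorem~C and the Nakayama-type remark after Theorem~B, together with the standard description of $X=\varprojlim A_n$ as a $\Lambda=\Z_p[[T]]$-module. We fix the setup first. Since $p$ is non-split in $k_0$ and $k_0$ is imaginary quadratic, only one prime ramifies in $k_\infty/k_0$, and it is totally ramified. Hence $A_n\cong X/\nu_n X$ with $\nu_n=((1+T)^{p^n}-1)/T$, and $A_0\cong X/TX$. By Ferrero--Washington, $\mu=0$, so $X$ is a finitely generated $\Z_p$-module of rank $\lambda$.

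Since $A_0$ is cyclic, $X/TX$ is cyclic, so by Nakayama $X$ is a cyclic $\Lambda$-module, say $X\cong\Lambda/I$. Then
\[
A_1\cong \Lambda/(I,\nu_1),\qquad A_1/pA_1\cong \F_p[T]/(\bar I,\bar\nu_1),
\]
where $\bar\nu_1\equiv T^{p-1}\pmod p$. Since $\mu=0$ and $X$ is $\Z_p$-free-ish of rank $\lambda$ up to finite, the ideal $\bar I\subset \F_p[[T]]$ is $(T^m)$ for some $m$. We aim to show $m=\lambda$; at least $m\ge\lambda$ is not enough, so the honest route is to note that $X/pX\cong \F_p[[T]]/(T^m)$ has $\F_p$-dimension $m$. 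Consequently
\[
\rank_p A_1=\dim \F_p[[T]]/(T^m,T^{p-1})=\min\{m,p-1\}.
\]
This disagrees with the claimed $\min\{p,\lambda\}$, which signals that I have mis-identified $A_1$. The correct statement for the totally ramified case with one prime is $A_n\cong X/\nu_n Y$, with $Y$ possibly differing, but with a single ramified prime in fact $Y=TX$ only for $A_0$, while $A_n\cong X/\omega_n X$, $\omega_n=(1+T)^{p^n}-1$. So we redo the computation with $\omega_1\equiv T^p$. This gives
\[
\rank_p A_1=\min\{m,p\},
\]
which is the expected shape. The same observation gives $\rank_p A_0=\min\{m,1\}=1$, which is consistent.

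It remains to show $m=\dim_{\F_p}X/pX$ equals $\lambda$, i.e.\ that $X$ has no nontrivial finite $\Lambda$-submodule obstructing this. We argue as follows. If $m<p$, then $\rank_p A_1=m<p$, so Theorem~C gives $\rank_p A_1=\lambda$, hence $\min\{m,p\}=\lambda=\min\{p,\lambda\}$. If $m\ge p$, then $\rank_p A_1=p$, and we need $\lambda\ge p$. Suppose instead $\lambda<p$. Since $X$ is cyclic with $\mu=0$, $I$ contains a distinguished polynomial of degree $\lambda$ times a unit modulo finite index issues, and $\bar I\ni T^{\lambda'}$ with $\lambda'\le\lambda$. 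The step I expect to be the main obstacle is justifying $\dim X/pX\le\lambda$ for cyclic $X$. For this I would use that $X$ finite-index in $\Lambda/(f)$ with $f$ distinguished of degree $\lambda$, combined with the fact, already implicit in Theorem~C, that the rank is bounded by $\lambda$ in the range below $p$. This gives $m\le\lambda<p$, a contradiction. Hence $\lambda\ge p$, and $\rank_p A_1=p=\min\{p,\lambda\}$.
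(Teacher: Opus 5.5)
Your computation $\rank_pA_1=\min\{m,p\}$ with $m=\dim_{\F_p}X/pX$ is correct, once you delete the false start with $\nu_1$. With one totally ramified prime, $Y_0=TX$ and $A_n\cong X/\nu_nY_0=X/\omega_nX$. The case $m<p$ is also correctly settled by Theorem~C (Theorem~\ref{rank lambda equal no cyclic hypothesis}).

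The gap is the case $m\ge p$, where you need $m\le\lambda$. The argument you sketch for it cannot work, because cyclicity and $\mu=0$ give the \emph{opposite} inequality. Write $I=gJ$ with $g$ a gcd of $I$ and $\Lambda/J$ finite. Then $\bar I=(T^{\lambda+k})$ with $k\ge 0$, and $k=0$ exactly when $J=\Lambda$, i.e.\ when $X$ has no nonzero finite submodule. For instance, $X=\Lambda/\bigl((T-p)p,(T-p)T\bigr)$ is cyclic with $\mu=0$, $\lambda=1$ and $X/TX\cong\Z/p^2\Z$. Yet $X/pX\cong\F_p[T]/(T^2)$, so $X/\omega_1X$ has $p$-rank $2\ne\min\{p,\lambda\}$. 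This example also refutes each of your intermediate claims:
\begin{enumerate}
\item $I$ need not contain the characteristic polynomial; one only has $I\subseteq(f)$.
\item $\bar I$ need not contain any $T^{\lambda'}$ with $\lambda'\le\lambda$.
\item $X$ need not embed with finite index in $\Lambda/(f)$.
\end{enumerate}
Moreover, Theorem~C says nothing in this case, since its hypothesis is $\rank_pA_1<p$ and here $\rank_pA_1=p$. The phrase ``modulo finite index issues'' hides the entire content of the statement.

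The missing input is the fact used in the paper's proof of Theorem~\ref{rank lambda equal no cyclic hypothesis}: for $k_0$ imaginary quadratic, $X$ has no nonzero finite $\Lambda$-submodule. Indeed, $X$ equals its minus part, which has this property; cf.~\cite{washington_book}. Together with $\mu=0$ this gives $X\simeq\Z_p^{\lambda}$, hence $m=\dim_{\F_p}X/pX=\lambda$ directly, and $\rank_pA_1=\min\{\lambda,p\}$ with no case split. This is exactly the paper's argument with $r_0=1$: $\overline{X}$ is a single Jordan block $\F_p[T]/(T^{a_1})$ with $a_1=\lambda$, truncated by $T^p$.

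Alternatively, within your framework you could apply Theorem~\ref{rank lambda equal no cyclic hypothesis} at a level $n$ with $p^n>m$. There $\rank_pA_n=\min\{m,p^n\}=m<p^n$, which forces $m=\lambda$. That theorem, however, rests on the same no-finite-submodule fact.
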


These last two results show that we can often determine the $\lambda$-invariant from $A_1$.
In Section~\ref{sec: 6}, we use these results to give heuristics for the $\lambda$-invariant.

Since $\abs{A_n}\to \infty$ as $n\to\infty$ but the Iwasawa $\mu$-invariant vanishes, the exponent of $A_n$ eventually increases with $n$.
The following results show that the exponent must increase from $A_0$ to $A_1$, except in special cases.

\begin{lthm}\emph{[Theorem~\ref{prop 4.8} and Corollary~\ref{3-4 strong}]}
Let $s = \{i \ : e_i = e_1\}$.
Write $\exp(\cdot)$ to denote the exponent of a group.
If $\rank_p A_1 < \rank_p A_0 +s(p-1)$ then $\exp(A_1)> \exp(A_0)$.
In particular, suppose that $s=\rank_p A_0 = r_0$ (in other words, suppose that $A_0 \simeq (\Z/p^a\Z)^{r_0}$ and $\exp(A_1)= \exp(A_0) =p^a$).
Then $\rank_p A_1 =p r_0$.
\end{lthm}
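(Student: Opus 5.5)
We would prove the statement (Theorem~E) using the standard structure of $A_1$ as a module over $\Z_p[G]$, where $G=\Gal(k_1/k_0)=\langle\sigma\rangle$ is cyclic of order $p$. Set $T=\sigma-1$, so that $N=1+\sigma+\dots+\sigma^{p-1}\equiv T^{p-1} \pmod p$.

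\textbf{Input facts.} Since $p$ is non-split in $k_0$ and totally ramified in $k_1/k_0$, we use two facts.
\begin{enumerate}
\item The norm $A_1\to A_0$ is surjective; this is totally ramified class field theory.
\item The lift $i:A_0\to A_1$ is injective, as is standard for cyclotomic $\Z_p$-extensions of imaginary quadratic fields with one ramified prime. The relevant facts are $H^1(G,E_1)$ and the fact that the ramified prime is principal up to $p$-power in $A_0$; we take this from the setup of Theorem~A.
\end{enumerate}
Hence $N=i\circ \mathrm{Norm}$ acting on $A_1$ has image $i(A_0)\cong A_0$, and $A_1^G$ has the same $p$-rank as $A_0$ up to ambiguous-class corrections. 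By Theorem~A, $V=A_1/pA_1$ decomposes into Jordan blocks for $T$ of length at most $p$, with exactly $r_0$ blocks. The number of length-$p$ blocks is $c=\dim \mathrm{im}(T^{p-1}|_V)$, which is the dimension of the image of $i(A_0)$ in $V$. This gives $\rank_p A_1=\sum_j \ell_j$ with $\ell_j\le p$, and $\ell_j=p$ for exactly $c$ blocks.

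\textbf{Main step.} Assume $\exp(A_1)=\exp(A_0)=p^{e_1}$. We show that $c\ge s$, i.e.\ that the image of $i(A_0)$ in $A_1/pA_1$ has dimension at least $s$. Then Theorem~A gives $\rank_pA_1\ge r_0+s(p-1)$, which is the contrapositive of the first claim.

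Let $x\in A_0$ have order $p^{e_1}$ and choose $y\in A_1$ with $\mathrm{Norm}(y)=x$, so that $Ny=i(x)$. Suppose $i(x)\in pA_1$, say $i(x)=pz$. Since $i$ is injective, $i(x)$ has order $p^{e_1}$, so $z$ has order $p^{e_1+1}$. This contradicts $\exp(A_1)=p^{e_1}$. Hence $i(x)\notin pA_1$.

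More generally, take the $s$-dimensional space $A_0[p^{e_1}]$-top part, namely $W=p^{e_1-1}A_0$, of dimension $s$. Any $x$ whose image lies outside $pA_0+A_0[p^{e_1-1}]$ in the top-order subspace avoids $pA_1$ after lifting, and the same argument applied to nontrivial combinations shows that the map from the top layer $A_0/(pA_0+A_0[p^{e_1-1}])\cong\F_p^s$ to $A_1/pA_1$ is injective. So $c\ge s$.

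\textbf{Special case.} When $s=r_0$, we get $c=r_0$, so every Jordan block has length $p$, and $\rank_pA_1=pr_0$. Combined with the upper bound $\rank_p A_1\le p\,r_0$ of Theorem~A, this gives equality.

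\textbf{Expected obstacle.} The delicate point is the injectivity of $i$ on $p$-parts, or at least on the top-order elements. The fallback is to use only that $i(x)$ has order $p^{e_1}$ for $x$ of maximal order; this follows from $\mathrm{Norm}\circ i=p$, provided we work with $\mathrm{Norm}(i(x))=x^p$ carefully. This gives order at least $p^{e_1-1}$ only, so genuine capitulation-freeness is needed, and we would cite it from the earlier sections.
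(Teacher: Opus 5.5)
Your proposal is correct and follows essentially the same route as the paper. You show that lifts of the $s$ maximal-order generators stay linearly independent in $A_1/pA_1$: a nontrivial combination has order $p^{e_1}$, so if its injective lift were $pz$, then $z$ would have order $p^{e_1+1}$. This gives $c\ge s$, and you then combine $r_1\ge r_0+(p-1)c$ with $r_1\le pr_0$ for the special case.

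One small wording fix. The map $A_0\to A_1/pA_1$ need not factor through $A_0/(pA_0+A_0[p^{e_1-1}])$, so state the conclusion as ``the kernel of $A_0\to A_1/pA_1$ lies in $A_0[p^{e_1-1}]$, hence the image has dimension at least $s$.'' Also note that $\exp(A_1)\ge\exp(A_0)$ because $j$ is injective; this is what makes your contrapositive formulation legitimate.
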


\begin{lthm} 
With notation as before, $\exp(A_0) = \exp(A_1)=p$ if and only if $\rank_p(A_1)=p\cdot \rank_p(A_0)$.
\end{lthm}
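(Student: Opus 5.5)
We prove both implications, using Theorem~\ref{ThmA} and the exponent results stated above.

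\emph{Direction ($\Rightarrow$).} Suppose $\exp(A_0)=\exp(A_1)=p$. Then $A_0\simeq(\Z/p\Z)^{r_0}$, so $s=r_0$ in the notation of the previous theorem. Its ``in particular'' clause (Theorem~\ref{prop 4.8} and Corollary~\ref{3-4 strong}) applies with $a=1$ and gives $\rank_p A_1=pr_0$. For a self-contained argument, argue by contrapositive with the first assertion of that theorem: if $\rank_p A_1<pr_0=\rank_p A_0+s(p-1)$, then $\exp(A_1)>\exp(A_0)=p$, a contradiction. The upper bound $\rank_p A_1\le p\rank_p A_0$ from Theorem~\ref{ThmA} then forces equality.

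\emph{Direction ($\Leftarrow$).} Assume $\rank_p A_1=p\,r_0$. The plan is to read this equality inside the proof of Theorem~\ref{ThmA}. Let $G=\Gal(k_1/k_0)=\langle\sigma\rangle$. Since $p$ is totally ramified in $k_1/k_0$ and non-split in $k_0$, exactly one prime ramifies. Hence the norm $N:A_1\to A_0$ is surjective and $A_1^G$, $A_1/(\sigma-1)A_1$ are controlled by $A_0$. In particular $\rank_p A_1/(\sigma-1)A_1\le r_0$.

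\begin{enumerate}
\item Write $V=A_1/pA_1$ as an $\F_p[G]$-module. It decomposes into Jordan blocks of length at most $p$, and the number of blocks equals $\dim V/(\sigma-1)V\le r_0$.
\item The equality $\dim V=pr_0$ therefore forces exactly $r_0$ blocks, each of length $p$. Hence $V\simeq\F_p[G]^{r_0}$ is free, and $c=r_0$.
\item Let $\omega=\sigma-1$ and $\nu=1+\sigma+\dots+\sigma^{p-1}$. In $\Z_p[G]$ we have $\nu\equiv\omega^{p-1}\pmod p$. Freeness of $V$ means that $\omega^{p-1}$ acts injectively on $V/\omega V$.
\item Lift to $A_1$ by Nakayama: $A_1$ is generated over $\Z_p[G]$ by $r_0$ elements. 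Since $\nu$ acts on $A_1$ as the composite of the norm with extension of ideals, $\nu A_1=j(A_0)$, where $j:A_0\to A_1$ is the lift map.
\end{enumerate}

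The aim is to show $pA_1=0$. Suppose $pA_1\ne0$. Since $p\in(\omega^{p-1},\nu)$ in $\Z_p[G]$, and $\nu A_1=j(A_0)$ has image of dimension $c=r_0$ in $V$, we get that $pA_1$ lies in $\omega^{p-1}A_1+j(A_0)$. Running this through the Jordan structure should give $pA_1\subseteq\omega^pA_1+p\,\omega A_1$, whose image in $V$ vanishes. Nakayama's lemma over the local ring $\Z_p[G]$ then gives $pA_1=0$. Consequently $\exp(A_1)=p$, and since $j$ is injective here (capitulation is trivial when $p\nmid[k_1:k_0]$ fails only in a controlled way; alternatively, use surjectivity of $N$ and $N(pA_1)=pA_0$), we also get $\exp(A_0)=p$. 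The cleaner route for $A_0$ is the second option: $pA_0=N(pA_1)=0$.

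\emph{Main obstacle.} The key step is the module-theoretic deduction that freeness of $A_1/pA_1$ over $\F_p[G]$ with $r_0$ generators forces $pA_1=0$. If the Nakayama argument above does not close, I would instead invoke the contrapositive of the exponent theorem. If $\exp(A_1)>p$, then some cyclic factor of $A_1$ has order at least $p^2$. I would show that such a factor produces a Jordan block of length less than $p$, via the relation $p=\omega^{p-1}u+\nu$ with $u$ a unit modulo $\omega$. That would give $c<r_0$, so $\rank_p A_1<pr_0$ by the count $\rank_p A_1=\rank_p A_0+(p-1)c$ at equality, a contradiction.
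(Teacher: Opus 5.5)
Your ($\Rightarrow$) direction is correct and is the paper's argument (Corollary~\ref{3-4 strong} with $a=1$, together with $r_1\le pr_0$). The ($\Leftarrow$) direction has a genuine gap, and it cannot be closed as you have set it up. You try to derive $pA_1=0$, and then $pA_0=0$, from $r_1=pr_0$ alone, and that implication is false. The paper's own example has $p=3$ and $k_0=\Q(\sqrt{-3001057})$, with $A_0\simeq\Z/9\Z$ and $A_1\simeq\Z/27\Z\oplus\Z/3\Z\oplus\Z/3\Z$, so $r_1=3=pr_0$. Since $A_0$ is cyclic, $\overline{A}_1$ is a single Jordan block of length $3=p$. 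So your steps 1--2 hold verbatim (free of rank $r_0$, $c=r_0$), yet $pA_1\simeq\Z/9\Z\neq 0$. Hence the step ``running this through the Jordan structure should give $pA_1\subseteq\omega^pA_1+p\,\omega A_1$'' is false. So is your fallback claim that a cyclic factor of order at least $p^2$ forces a Jordan block of length less than $p$. Also, ``whose image in $V$ vanishes'' is vacuous, since all of $pA_1$ maps to $0$ in $V=A_1/pA_1$; Nakayama would need $pA_1\subseteq(p,\omega)\,pA_1$. The statement has to be read with $A_0$ elementary, which is the hypothesis of Theorem~\ref{p elem}, the result this introductory statement summarizes. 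So $\exp(A_0)=p$ is an input to the argument, not something you can deduce from the rank condition.

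The missing idea is how that hypothesis is used, and it makes the argument short. Take $A_0\simeq(\Z/p\Z)^{r_0}$ and $c=r_0$. Then the natural map $j(A_0)\to\overline{j}(\overline{A}_0)\subseteq\overline{A}_1$ is a surjection between $\F_p$-spaces of the same dimension $r_0$, hence injective; that is, $j(A_0)\cap pA_1=0$. Now $pA_1$ is stable under $\sigma$. If it were nonzero, it would contain a nonzero $\sigma$-fixed element, because orbits of a $p$-group acting on a finite $p$-group have $p$-power size and $0$ is fixed. But $A_1^{\langle\sigma\rangle}=j(A_0)$, so such an element would lie in $j(A_0)\cap pA_1=0$, a contradiction. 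Thus $pA_1=0$, with no Nakayama argument over $\Z_p[G]$ needed.
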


The main idea in Theorem~\ref{ThmA} and its consequences is to study the mod-$p$ Galois-module structure of the first-layer class group.
Let $G=\Gal(k_1/k_0)$.
The quotient $A_1/pA_1$ is naturally a $\F_p[G]$-module, and its Jordan-block decomposition records how classes from $A_0$ behave after lifting to $k_1$.
The lengths of the blocks control the growth of the $p$-rank; also, they impose restrictions on the growth of the exponent and of the individual cyclic factors.
This yields the rank bounds in Theorem~\ref{ThmA} and the structural consequences.

\subsubsection{Heuristics}
These structural results also provide the bridge to the heuristic questions considered in the second part of the paper.
This identification between first-layer rank and $\lambda$-invariant allows a direct comparison between first-layer class-group heuristics and the existing heuristics for $\lambda$.

In the cyclic case, the two distributions agree exactly.
The first-layer rank determines the $\lambda$-invariant as long as the rank is $<p$, while rank $p$ corresponds exactly to the case $\lambda\ge p$.
Consequently, a heuristic distribution for $\rank_pA_1$ can be compared directly with the heuristic distribution for $\lambda$.
Using the philosophy of Cohen--Lenstra--Martinet heuristics and the concrete description of the possible structure of $p$-class groups of $k_1$ when $A_0$ is cyclic (from \cite{kw_rms}) we derive the following.

\begin{lheur}[Heuristic Consequences]
\label{ThmC}
Fix an odd prime $p$.
Let $k_0$ denote an imaginary quadratic field and $k_1$ to denote the first layer of the cyclotomic $\Z_p$-extension of $k_0$.
Then
\[
\Pr(\rank_p A_1 = d \mid \rank_p A_0=1) = \begin{cases}
    \frac{p-1}{p^d} & \text{ if } 1\leq d \leq p-1 \\
    \frac{1}{p^{p-1}} & \text{ if } d= p\\
    0 & \text{ if } d>p.
\end{cases}.
\]
\end{lheur}

\begin{lheur}
With the same assumptions as in the previous heuristic, let $\lambda$ be the Iwasawa invariant for the cyclotomic $\mathbb Z_p$-extension of $k_0$.
Then
\begin{align*}
\Pr(\lambda = d \mid \rank_p A_0=1) &=    \frac{p-1}{p^d} \quad \text{ if } 1\leq d \leq p-1 \\
\Pr(\lambda \ge p \mid \rank_p A_0 =1) &=    \frac{1}{p^{p-1}}.
\end{align*}
\end{lheur}

It is interesting that the heuristic for the size of $\lambda$ agrees with what we expect for the first coefficient of a random $p$-adic power series, to be not divisible by $p$,  where the probability of $p$ dividing a given coefficient (other than the constant term) is $1/p$.
This is the basis of the naive heuristic that was proposed in \cite{kw_rms}.

\section*{Acknowledgements}
This project was initiated while DK was in residence at the Lodha Mathematical Sciences Institute, Mumbai.
She is supported by NSERC Discovery grants RGPIN-2026-07384 and DGECR-2026-00254. The third author acknowledges the Indian Institute of Technology Kanpur for the Institute Fellowship and facilities that support this research. 

\section{Preliminaries}
\subsection{Notation and Setup}
\label{sec: setup}
\begin{itemize}
\item $k_0$ is an imaginary quadratic field.
\item $p$ is an odd prime that is not split in $k_0$
\item $k_1$ is the first level of the cyclotomic $\mathbb Z_p$-extension of $k_0$.
\item $G=\langle \sigma\rangle = \Gal(k_1/k_0)$, generated by an element $\sigma$ with $\sigma^p=1$.
\item $A_i$ is the Sylow $p$-subgroup of the ideal class group of $k_i$.
\item $r_i=\rank_p (A_i)$, where the $p$-rank of a finite abelian $p$-group $A$ is defined to be $\rank_p(A)=\dim_{\mathbb F_p}A/pA$.
\item $\exp(A)$ is the exponent of a finite abelian group $A$; that is,  the largest order of an element of $A$.
\item $j: A_0\to A_1$ is the natural lifting map.
It is known in our situation that $j$ is an  injection (see \cite[Proposition~13.26]{washington_book}).
\item $\mathcal{N}: A_1\to A_0$ is the norm map on ideal class groups.
Since $k_1/k_0$ is totally ramified, $\mathcal{N}$ is surjective.
\item $\Nm=1+\sigma+\cdots +\sigma^{p-1}\in \mathbb Z[G]$.
For example, the map $\Nm: A_1\to A_1$ is the same as $j\circ \mathcal{N}$.
\item For a finite abelian $p$-group $A$, we let $\overline{A}=A/pA$.
\item $\overline{j}: \overline{A}_0\to \overline{A}_1$ is the natural map induced by $j: A_0\to A_1$.
\item We use the decompositions
\begin{equation}
\label{A0 decomp}
A_0\simeq \bigoplus_{i=1}^{r_0} \mathbb Z/p^{e_i}\mathbb Z, \qquad A_1\simeq \bigoplus_{i=1}^{r_1} \mathbb Z/p^{f_i}\mathbb Z,
\end{equation}
with $e_1\ge e_2 \ge \dots$ and $f_1\ge f_2\ge \dots$.
\end{itemize}

\subsection{Basic Results}
\label{sec: 2}

Let $F$ be a number field and $p$ be a prime number.
Let $K/F$ be an extension of number fields.
Write $\Cl(*)$ to denote the class group of a number field.
We denote the action of $\sigma \in {\Gal}(K/F)$ on $x \in \Cl(K)$ by $\sigma(x)$.
When $K/F$ is a cyclic extension, Chevalley \cite{chevalley} derived formula for the number of classes of $\Cl(K)$ fixed by the Galois action.

\begin{theorem}
\label{genusfor}
Let $K/F$ be a cyclic extension of number fields with Galois group $G$.
Let $\Cl(K)^{G}$ be the subgroup of $\Cl(K)$ consisting of the ideal classes that are fixed by the action of $G$ on $\Cl(K)$.
Let $\Nm_{K/F}$ denote the norm map from $K$ to $F$.
Let $E(F)$ be the unit group of $F$.
Then
\[
\#\Cl(K)^{G} =  \dfrac{\#\Cl(F)\prod_{v}e_v}{[K:F]\cdot\left[ E(F):E(F)\cap \Nm_{K/F}(K^{\times}) \right]}.
\]
where $v$ runs over all primes of $F$ and $e_v$ is the ramification index of $v$ of $F$ in $K$.
\end{theorem}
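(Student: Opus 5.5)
The plan is to follow the classical proof of Chevalley's ambiguous class number formula through class field theory and cohomology of cyclic groups. Throughout, write $G=\langle\sigma\rangle$, $n=[K:F]$, and let $I_K$, $P_K$, $E_K$ denote the ideals, principal ideals and units of $K$. I would start from the exact sequences of $G$-modules
\[
1\to E_K\to K^\times\to P_K\to 1,\qquad 1\to P_K\to I_K\to \Cl(K)\to 1,
\]
and take $G$-invariants. This gives
\[
1\to P_K^G\to I_K^G\to \Cl(K)^G\to H^1(G,P_K)\to H^1(G,I_K).
\]
Since $I_K$ is a direct sum of induced modules over the primes of $F$, Shapiro's lemma and the triviality of $H^1$ of $\Z$ for a finite group give $H^1(G,I_K)=0$. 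Hence
\[
\#\Cl(K)^G=[I_K^G:P_K^G]\cdot\#H^1(G,P_K).
\]

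Next I would compute each factor.

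\emph{The factor $[I_K^G:P_K^G]$.} The group $I_K^G$ is generated by $I_F$ and the ramified primes $\mathfrak P_v$ with $\mathfrak P_v^{e_v}=\mathfrak p_v$. Therefore $[I_K^G:I_F]=\prod_v e_v$, and $[I_K^G:P_K^G]=\prod_v e_v\cdot[I_F:P_K^G]$.

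\emph{The factor $P_K^G$ modulo $P_F$.} From the first sequence, $P_K^G/P_F\cong \ker\bigl(H^1(G,E_K)\to H^1(G,K^\times)\bigr)$. Hilbert 90 gives $H^1(G,K^\times)=0$, so $P_K^G/P_F\cong H^1(G,E_K)$. It follows that
\[
[I_F:P_K^G]=\frac{\#\Cl(F)}{\#H^1(G,E_K)}.
\]

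\emph{The factor $H^1(G,P_K)$.} Since $G$ is cyclic, $H^1=\hat H^{-1}$ and $\hat H^{-1}(G,K^\times)=0$. The long exact sequence then gives $H^1(G,P_K)\cong\ker\bigl(\hat H^0(G,E_K)\to\hat H^0(G,K^\times)\bigr)$, which is $(E_F\cap \Nm K^\times)/\Nm E_K$.

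Combining these,
\[
\#\Cl(K)^G=\frac{\#\Cl(F)\prod_v e_v}{\#H^1(G,E_K)}\cdot\frac{[E_F:\Nm E_K]}{[E_F:E_F\cap\Nm K^\times]}.
\]
It remains to show that the Herbrand quotient satisfies
\[
h(E_K)=\frac{[E_F:\Nm E_K]}{\#H^1(G,E_K)}=\frac{1}{n},
\]
which is exactly the displayed formula.

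The main obstacle is this unit-index computation. I would use the standard argument via Dirichlet's unit theorem. As a $G$-module, $E_K\otimes\Q$ is isomorphic to $\Q[S_\infty(K)]$ minus the trivial representation. Herbrand quotients are multiplicative, trivial on finite modules, and equal to $1$ on induced modules. From these facts one gets
\[
h(E_K)=\frac{h(\Z[S_\infty(K)])}{h(\Z)}=\frac{\prod_{v\mid\infty}\#G_v}{n}.
\]
Here the ramification indices at infinite places appear. So the formula as stated requires that $\prod_v e_v$ range over all places, including the archimedean ones; with that convention the product of the infinite local factors cancels against the archimedean $e_v$. In the paper's application $F=k_0$ is imaginary quadratic, so every infinite place is complex and no archimedean ramification occurs. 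Alternatively, the whole result could simply be cited from Chevalley \cite{chevalley}, as the paper does.
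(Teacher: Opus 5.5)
The paper does not prove this statement: it quotes Chevalley and only applies the formula with $F$ imaginary quadratic. Your argument is the standard cohomological proof, and it is correct. Its ingredients are the two exact sequences, $H^1(G,I_K)=0$ by Shapiro's lemma, Hilbert 90 giving $P_K^G/P_F\cong H^1(G,E_K)$ and $H^1(G,P_K)\cong (E_F\cap\Nm K^\times)/\Nm E_K$, and the Herbrand quotient of the units. Compared with the citation, it gives a self-contained derivation and makes the archimedean contribution explicit.

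One step should be rephrased. $P_K^G$ is in general not contained in $I_F$: in $\Q(i)/\Q$ the ideal $(1+i)$ lies in $P_K^G$ but not in $I_{\Q}$. So $[I_F:P_K^G]$ is not an index of a subgroup. Use the common subgroup $P_F$ instead: $[I_K^G:P_F]=\#\Cl(F)\prod_{v\nmid\infty}e_v$ and $[P_K^G:P_F]=\#H^1(G,E_K)$. This gives exactly the value of $[I_K^G:P_K^G]$ you use.

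Two points in the unit computation rely on standard results you do not name, which is fine:
\begin{itemize}
\item Passing from $E_K\otimes\Q\oplus\Q\cong\Q[S_\infty(K)]$ to $h(E_K)\,h(\Z)=h(\Z[S_\infty(K)])$ uses the lemma that $G$-lattices spanning isomorphic rational representations have equal Herbrand quotients (embed one in the other with finite cokernel).
\item $h(\Z[S_\infty(K)])=\prod_{v\mid\infty}\#G_v$ comes from Shapiro's lemma for induction from the decomposition groups $G_v$. It does not come from $h=1$ for modules induced from the trivial subgroup.
\end{itemize}

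Your caveat about archimedean places is correct, and it is a genuine correction to the statement as printed. If $v$ runs only over finite primes, the formula is false. For $F=\Q$, $K=\Q(i)$ we have $-1\notin\Nm K^\times$, so the right-hand side would be $2/(2\cdot 2)=1/2$. Including $e_\infty=2$ gives the correct value $1$. Every archimedean place of an imaginary quadratic field is complex, so this does not affect the paper's corollary or anything that uses it.
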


\begin{cor} Let $p$ be an odd prime, let $k$ be an imaginary quadratic field, and let $L/k$ be a cyclic extension of degree $p$ that is ramified at exactly one prime of $k$.
Then 
$\#\Cl(L)^G = \#\Cl(k)$.
\end{cor}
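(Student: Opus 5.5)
We apply Theorem~\ref{genusfor} with $F=k$, $K=L$ and $[L:k]=p$, and evaluate the three non-class-group factors on the right-hand side one at a time.

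\emph{Ramification.} By hypothesis exactly one prime $v_0$ of $k$ ramifies in $L$. Since $[L:k]=p$ is prime, this prime is totally ramified, so $e_{v_0}=p$. Every other finite prime has $e_v=1$. The infinite places of $k$ are complex, so they cannot ramify. Hence
\[
\prod_v e_v = p,
\]
and this cancels the factor $[K:F]=p$ in the denominator.

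\emph{Unit index.} We show $[E(k):E(k)\cap \Nm_{L/k}(L^{\times})]=1$. Since $k$ is imaginary quadratic, $E(k)$ is the finite group of roots of unity in $k$, of order $2$, $4$ or $6$. The quotient $E(k)/(E(k)\cap \Nm_{L/k}L^{\times})$ is killed by $p$, because $u^p=\Nm_{L/k}(u)$ for $u\in k$. It is also a quotient of a group of order prime to $p$, except when $p=3$ and $k=\Q(\sqrt{-3})$. Outside that case the index is therefore $1$.

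\emph{The exceptional case $p=3$, $k=\Q(\sqrt{-3})$.} This is the main obstacle, and here I would use the Hasse norm theorem. Because $L/k$ is cyclic, a unit $u$ is a global norm if and only if it is a local norm at every place. At unramified places every unit is a local norm. At complex places there is no condition. So the only possible obstruction is at $v_0$. By the product formula for norm residue symbols,
\[
\prod_v (u, L/k)_v = 1,
\]
and all factors except the one at $v_0$ are trivial. Hence $(u,L/k)_{v_0}=1$ as well, so $u$ is a local norm at $v_0$. Therefore $u$ is a global norm and the index is again $1$. This argument in fact works uniformly for all $p$, so I would present it as the proof in every case and treat the order argument above only as a sanity check.

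Substituting these three evaluations into Theorem~\ref{genusfor} gives
\[
\#\Cl(L)^G = \frac{\#\Cl(k)\cdot p}{p\cdot 1} = \#\Cl(k),
\]
as claimed.
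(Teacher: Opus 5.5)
Your proof is correct. Its skeleton matches the paper's: apply Theorem~\ref{genusfor}, note that $\prod_v e_v=p$ cancels $[L:k]=p$, and show the unit index is $1$. For $k\ne\Q(\sqrt{-3})$, your argument (the quotient is killed by $p$ and has order prime to $p$) is an abstract form of the paper's observation that $-1=\Nm_{L/k}(-1)$ and $\pm i=\Nm_{L/k}(i)$.

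The genuine difference is how you handle $k=\Q(\sqrt{-3})$, which only causes trouble when $p=3$. The paper does not compute any norms there. It uses $\#\Cl(\Q(\sqrt{-3}))=1$, so the right side of Chevalley's formula is $1/[E(k):E(k)\cap\Nm_{L/k}(L^\times)]$. Since the left side is a positive integer, the index must be $1$.

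Your route through the Hasse norm theorem and the product formula $\prod_v (u,L/k)_v=1$ is valid. Units are local norms at unramified finite places, and complex places impose no condition, so reciprocity forces the symbol at $v_0$ to be trivial as well. This route buys more generality: in any cyclic extension ramified at a single place, every global unit of the base is a norm, regardless of the class number of the base.

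The paper's trick is shorter and needs nothing beyond Chevalley's formula itself. It works only because the index is a $p$-power that must divide $\#\Cl(k)$. So it relies on $p\nmid \#\Cl(k)$, which holds here because the class number is $1$.
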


\begin{proof} Suppose $k\ne\mathbb Q(\sqrt{-3})$.
Since $-1=\Nm_{L/k}(-1)$ and  $\pm i=\Nm_{L/k}(i)$, we have
\[
\left[ E(k):E(k)\cap \Nm_{L/k}(L^{\times}) \right]=1.
\]
When $k=\mathbb Q(\sqrt{-3})$,  the right side of Chevalley's formula is
$1/[ E(k):E(k)\cap \Nm_{L/k}(L^{\times})]$, so the index is again equal to 1.
The result follows easily.
\end{proof}

\begin{propn}
\label{Ker N = A1^{sigma -1}}
Let the notation be as in Section \ref{sec: setup}
Then 
\begin{enumerate}
\item [\textup{(}a\textup{)}] $j(A_0) \cong A_1^{\langle \sigma \rangle},$
\item[\textup{(}b\textup{)}] $(\sigma - 1)A_1  := \{ (\sigma - 1)x \,\mid\,  \ x \in A_1\} = \ker(\Nc)$.
\end{enumerate}
\end{propn}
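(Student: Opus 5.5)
For part (a), I will combine the Corollary to Chevalley's formula with the injectivity of $j$. Since $p$ is non-split in $k_0$, exactly one prime of $k_0$ lies above $p$. The cyclotomic $\Z_p$-extension is unramified outside $p$, and $k_1/k_0$ is totally ramified at $p$. So $k_1/k_0$ is cyclic of degree $p$ and ramified at exactly one prime. The Corollary then gives $\#\Cl(k_1)^G=\#\Cl(k_0)$.

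Next I pass to $p$-parts. Write $\Cl(k_1)=A_1\oplus B_1$ with $B_1$ of order prime to $p$. Both summands are $G$-stable, so $\Cl(k_1)^G=A_1^G\oplus B_1^G$. The map $j$ is injective on all of $\Cl(k_0)$ on the prime-to-$p$ part, because $\mathcal N\circ j$ is multiplication by $p$. Hence $\#B_1^G\ge \#B_0$, and likewise $\#A_1^G\ge \#A_0$ because $j(A_0)\subseteq A_1^G$ and $j$ is injective on $A_0$ (the cited Proposition~13.26). Comparing with $\#\Cl(k_1)^G=\#A_0\cdot\#B_0$ forces both inequalities to be equalities. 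In particular $j(A_0)=A_1^{G}$, which is (a).

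For part (b), the inclusion $(\sigma-1)A_1\subseteq\ker\Nc$ is immediate, since $\Nc(\sigma x)=\Nc(x)$. For equality I will compare orders. Because $\Nc$ is surjective (total ramification), $\#\ker\Nc=\#A_1/\#A_0$. Also $\#(\sigma-1)A_1=\#A_1/\#A_1^{G}$, since $A_1^G$ is the kernel of $\sigma-1$ acting on $A_1$. By (a), $\#A_1^G=\#A_0$. The two subgroups therefore have the same order, and the inclusion is an equality.

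The only delicate step is the order count in (a). It needs two inputs: the injectivity of $j$, which is quoted from the literature, and the $p$-part splitting of Chevalley's formula. I expect no real obstacle, but I will take care that the prime-to-$p$ comparison is stated correctly, since it is what lets the total count in the Corollary pin down the $p$-part exactly.
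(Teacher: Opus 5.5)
Your proof is correct and follows the paper's argument: Chevalley's formula gives $\#A_1^{G}=\#A_0$, and together with injectivity of $j$ and $j(A_0)\subseteq A_1^{G}$ this yields (a); comparing $\#(\sigma-1)A_1=\#A_1/\#A_1^{G}$ with $\#\ker\Nc=\#A_1/\#A_0$ then gives (b). The prime-to-$p$ comparison you flag as the delicate step is valid but unnecessary: $\Cl(k_1)^G=A_1^G\oplus B_1^G$ with $\#B_1^G$ prime to $p$, so $\#A_1^G$ is just the $p$-part of the integer $\#\Cl(k_1)^G=\#\Cl(k_0)$, which is $\#A_0$.
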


\begin{proof}
We know that
\[
\# A_1^{\langle \sigma \rangle} = \# A_0.
\]
Since $\#j(A_0)=\#A_0$ and $j(A_0)$ is contained in $A_1^{\langle \sigma \rangle}$, part (a) follows.

The exact sequence $0\to A_1^{\langle \sigma \rangle} \to A_1 \to (\sigma -1)A_1\to 0$ and part (a) tell us that 
\[
\#A_0= \# A_1^{\langle \sigma \rangle} = \# A_1/(\sigma -1)A_1.
\]
Since $(\sigma-1)A_1\subseteq \ker\mathcal{N}$ and $A_1/\ker\mathcal{N}\simeq A_0$, we must have $\ker(\mathcal{N})=(\sigma-1)A_1$.
\end{proof}

\section{Structure of the First-Layer Class Group}
\label{sec: 3}
We begin by proving an elementary result allowing us to compare the order of the groups $A_0$ and $A_1$.
Computations immediately indicate that $A_1$ is larger than $A_0$.
In fact, we have the following.

\begin{propn}
\label{3-1}
With notation introduced in Section~\ref{sec: setup},
\[
\#A_1 \geq p^{r_0} \cdot \#A_0.
\]
\end{propn}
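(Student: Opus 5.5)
We need $\#A_1 \ge p^{r_0}\#A_0$. The plan is to compare $A_1$ with two natural pieces: the image of the norm and its kernel. Since $\Nc: A_1\to A_0$ is surjective (total ramification), we have $\#A_1 = \#A_0\cdot \#\ker\Nc$. By Proposition~\ref{Ker N = A1^{sigma -1}}(b), $\ker\Nc=(\sigma-1)A_1$. It therefore suffices to show $\#(\sigma-1)A_1 \ge p^{r_0}$, and for this it is enough to find $r_0$ independent elements of order $p$ (or simply a subgroup of order $\ge p^{r_0}$) inside $(\sigma-1)A_1$.

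The key step is to show $j(A_0)[p]\subseteq (\sigma-1)A_1$, that is, $j(A_0)\cap\ker\Nc \supseteq j(A_0[p])$. For $x\in A_0$ we have $\Nc(j(x)) = x^p$, written additively as $px$, since $\Nc\circ j$ is multiplication by $[k_1:k_0]=p$. Hence $j(A_0[p])\subseteq \ker\Nc=(\sigma-1)A_1$. Because $j$ is injective, $j(A_0[p])$ has order $\#A_0[p]=p^{r_0}$. This gives $\#\ker\Nc\ge p^{r_0}$, and combining with the first paragraph yields $\#A_1=\#A_0\cdot\#\ker\Nc\ge p^{r_0}\#A_0$.

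The only point to double-check is the identity $\Nc\circ j=p$ on $A_0$; this is the standard fact that the norm of an extended ideal is its $[k_1:k_0]$-th power. With that in hand, injectivity of $j$ (already quoted in the setup) and part (b) of Proposition~\ref{Ker N = A1^{sigma -1}} do all the work. The step I would scrutinize most is the use of Proposition~\ref{Ker N = A1^{sigma -1}}(b). In fact it is not even needed here: the inclusion $j(A_0[p])\subseteq\ker\Nc$ together with $\#A_1=\#A_0\cdot\#\ker\Nc$ already suffices. So no real obstacle remains.
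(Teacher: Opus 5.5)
Your proposal is correct and follows essentially the same route as the paper's proof: surjectivity of $\Nc$ gives $\#A_1=\#A_0\cdot\#\ker\Nc$, and $\Nc\circ j=p$ together with injectivity of $j$ gives $j(A_0[p])\subseteq\ker\Nc$, so $\#\ker\Nc\ge p^{r_0}$. As you noted yourself, the appeal to Proposition~\ref{Ker N = A1^{sigma -1}}(b) is superfluous, and the paper does not use it.
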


\begin{proof}
We know that $j: A_0 \hookrightarrow A_1$ is injective and $\Nc: A_1 \to A_0$ is surjective.
We also know that $\Nc \circ j = p$,
so the kernel of $\Nc$ contains $j(A_0[p])$.
In particular, $\#\ker(\Nc)\geq p^{r_0}$ where $r_0$ is the $p$-rank of $A_0$.
We have
\[
\frac{\# A_1}{\# A_0} = \# \ker(\Nc) \geq p^{r_0}. \qedhere
\]
\end{proof}
Section~\ref{sec: relative growth}, we will greatly strengthen this result.

\subsection{Jordan Decomposition and Rank Growth}
We begin by analyzing the $\F_p[G]$-module structure of $A_1/pA_1$, which gives direct bounds on the growth of the $p$-rank from $A_0$ to $A_1$.

\begin{theorem}
\label{thm 1 LCW write-up}
With notation introduced in Section~\ref{sec: setup},
\[
r_0 +(p-1)\rank_p(\overline{j}(\overline{A}_0)) \le r_1  \le p\cdot  r_0.
\]
\end{theorem}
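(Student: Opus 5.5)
The plan is to study $V=\overline{A}_1=A_1/pA_1$ as a module over $\F_p[G]$. Put $x=\sigma-1$. Then $\F_p[G]\cong \F_p[x]/(x^p)$, which is a local principal ideal ring. So every finitely generated module over it is a direct sum of cyclic modules $\F_p[x]/(x^{\ell})$ with $1\le \ell\le p$; these are the Jordan blocks of $\sigma$ acting on $V$. Write $V\cong\bigoplus_{b=1}^{m}\F_p[x]/(x^{\ell_b})$, so that $r_1=\dim V=\sum_b \ell_b$.

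Two counts for such a decomposition are immediate:
\begin{itemize}
\item $\dim V/xV=m$, the number of blocks;
\item $\dim x^{p-1}V$ equals the number of blocks with $\ell_b=p$, since $x^{p-1}$ kills $\F_p[x]/(x^{\ell})$ for $\ell<p$ and has one-dimensional image when $\ell=p$.
\end{itemize}
The proof then reduces to identifying these two quantities arithmetically.

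\emph{Number of blocks.} We have $V/xV\cong A_1/(pA_1+(\sigma-1)A_1)$. By Proposition~\ref{Ker N = A1^{sigma -1}}(b), $(\sigma-1)A_1=\ker\Nc$. Since $\Nc$ is surjective, $A_1/(\sigma-1)A_1\cong A_0$. Reducing mod $p$ gives $V/xV\cong A_0/pA_0$, so $m=r_0$. Because each $\ell_b\le p$, this already gives $r_1=\sum_b\ell_b\le p\,r_0$.

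\emph{Number of blocks of length $p$.} Over $\F_p$ we have $\Nm=1+\sigma+\cdots+\sigma^{p-1}=\frac{\sigma^p-1}{\sigma-1}\equiv(\sigma-1)^{p-1}=x^{p-1}$, using $\sigma^p-1\equiv(\sigma-1)^p \pmod p$. Hence $x^{p-1}V$ is the image of $\Nm A_1=j(\Nc(A_1))=j(A_0)$ in $V$, again by surjectivity of $\Nc$. That image is exactly $\overline{j}(\overline{A}_0)$, so the number of blocks of length $p$ is $c:=\rank_p(\overline{j}(\overline{A}_0))$.

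\emph{Lower bound.} There are $c$ blocks of length $p$, and the remaining $r_0-c$ blocks each have length at least $1$. Therefore $r_1\ge pc+(r_0-c)=r_0+(p-1)c$, which is the left inequality.

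The only delicate points are the two identifications: that $V/xV$ really is $A_0/pA_0$, which needs the equality $\ker\Nc=(\sigma-1)A_1$ rather than just an inclusion, and the congruence $\Nm\equiv(\sigma-1)^{p-1}\pmod p$. Both are short, so I expect no serious obstacle. The main thing to check carefully is that passing to quotients mod $p$ commutes with these identifications; this holds since all maps are $\Z[G]$-linear.
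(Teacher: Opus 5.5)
Your proposal is correct and follows essentially the same route as the paper. You decompose $\overline{A}_1$ into Jordan blocks for $T=\sigma-1$, identify the number of blocks with $r_0$ via $\ker\Nc=(\sigma-1)A_1$, and identify the number of length-$p$ blocks with $\rank_p(\overline{j}(\overline{A}_0))$ via $\Nm\equiv T^{p-1} \pmod p$ and $\Nm A_1=j(A_0)$.
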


\begin{proof}
We deduce from Proposition~\ref{Ker N = A1^{sigma -1}} that
\[
\frac{A_1}{(\sigma -1)A_1} \simeq A_0.
\]
Let $M=A_1/(\sigma-1)A_1$.
We further have
\[
\frac{\overline{A}_1}{(\sigma-1)\overline{A}_1} \simeq \frac{A_1}{pA_1+(\sigma-1)A_1}\simeq \frac{M}{pM}\simeq \overline{A}_0.
\]
Therefore, $\rank_p (\overline{A}_1/(\sigma-1)\overline{A}_1) = \rank_p A_0 =r_0$.

Since $\sigma^p =1$, it is clear that $T^p = 0$ on $\overline{A}_1$ where $T = \sigma-1$.
In particular, $T$ is nilpotent and we can write $\overline{A}_1$ as a finite module over $\F_p[\Z/p\Z] \simeq \F_p[T]/\langle T^p \rangle$, hence over $\F_p[T]$.
Therefore, the structure theorem for modules over a PID tells us that
\begin{equation}
\label{modp}
\overline{A}_1 \simeq \bigoplus_{i=1}^{r_0} \frac{\mathbb F_p[T]}{\langle T^{n_i}\rangle},
\end{equation}
for some integers $n_i$ with $1\le n_i\le p$.
There are $r_0$ summands because $\overline{A}_1/T\overline{A}_1$ has $p$-rank $r_0$.
This immediately yields the inequality 
\begin{equation}
\label{R as sum of ni}
r_1 = \dim_{\F_p} (\overline{A}_1) = \rank_p A_1 = \sum_{i=1}^{r_0} n_i \leq pr_0.
\end{equation}

Set $\Nm = 1 + \sigma + \ldots + \sigma^{p-1}$.
The map 
\[
\overline{A}_1 \longrightarrow \frac{\overline{A}_1}{(\sigma-1)\overline{A}_1} \longrightarrow \overline{A}_0 \longrightarrow \overline{j}(\overline{A}_0)\subseteq \overline{A}_1
\]
is the map from $\overline{A}_1$ to $\overline{A}_1$ given by 
\[
1+\sigma+\sigma^2+\cdots+\sigma^{p-1}= \sum_{i=1}^{p} \binom{p}{i}T^{i-1}\equiv T^{p-1} \pmod p.
\]
In other words, $T^p=T \overline{\Nm} = 0$ on $\overline{A}$ and $\overline{\Nm} = T^{p-1}$ on $\overline{A}$.
On a Jordan block of the form $V_n = \F_p[T]/\langle T^n \rangle$, the operator $T^{p-1}$ has one-dimensional image if $n=p$ and is zero otherwise.
Since $\Nm(A_1) = j(A_0)$, we have
\begin{equation}
\label{13}
\rank_p(\overline{j}(\overline{A}_0))=\rank_p( \operatorname{image}(T^{p-1}))=\#\{i \, | \, n_i=p\}.
\end{equation}
Set 
\[
c = \# \{i \mid n_i =p\}.
\]
Exactly $c$ of the Jordan blocks have length $p$, while each of the remaining $r_0-c$ blocks has length $\geq 1$, so 
\[
r_1=\sum_{i=1}^{r_0} n_i \ge cp + (r_0 - c) = r_0 + (p-1)c
\]
This completes the proof.
\end{proof}

\begin{cor}
\label{propn 4.7}
If $r_1<r_0 + p-1$ then
\[
j(A_0) \subseteq pA_1.
\]
\end{cor}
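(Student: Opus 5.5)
The plan is to apply Theorem~\ref{thm 1 LCW write-up} directly and then translate a statement about $\overline{A}_1$ back into one about $A_1$. Write $c=\rank_p(\overline{j}(\overline{A}_0))$. The lower bound of the theorem reads $r_1 \ge r_0+(p-1)c$. Suppose $c\ge 1$. Then $r_1\ge r_0+p-1$, which contradicts the hypothesis $r_1<r_0+p-1$. Hence $c=0$, so $\overline{j}(\overline{A}_0)=0$ inside $\overline{A}_1=A_1/pA_1$.

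Next I would unwind the definition of $\overline{j}$. It is the map $A_0/pA_0\to A_1/pA_1$ induced by $j$. Its image is $(j(A_0)+pA_1)/pA_1$, because $\overline{j}$ sends the class of $x$ to the class of $j(x)$ and every element of $A_0/pA_0$ is the class of some $x\in A_0$. So this image vanishes exactly when $j(x)\in pA_1$ for every $x\in A_0$, that is, when $j(A_0)\subseteq pA_1$. That is the claim.

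One could also phrase the argument with the Jordan decomposition~\eqref{modp}. The hypothesis forces every $n_i<p$, so $T^{p-1}=\overline{\Nm}$ kills $\overline{A}_1$. Then $j(A_0)=\Nm(A_1)$ maps to zero in $\overline{A}_1$, using that $\Nc$ is surjective, so that $j(A_0)=j(\Nc(A_1))=\Nm(A_1)$.

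There is no real obstacle. The only point needing care is identifying the image of $\overline{j}$ with $(j(A_0)+pA_1)/pA_1$, and this follows from the definitions.
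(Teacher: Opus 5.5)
Your proposal is correct and matches the paper's proof. The paper uses the same lower bound $r_1\ge r_0+(p-1)\rank_p(\overline{j}(\overline{A}_0))$ from Theorem~\ref{thm 1 LCW write-up} to force $\overline{j}(\overline{A}_0)=0$, which is equivalent to $j(A_0)\subseteq pA_1$; your Jordan-block rephrasing simply unwinds that theorem's proof and is also valid.
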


\begin{proof}
The theorem implies that $\overline{j}(\overline{A}_0)=0$.
This is equivalent to $j(A_0) \subseteq pA_1$.
\end{proof}

\begin{cor}
\label{Cor 2 from LCW write up}
Suppose that $r_1 < r_0 + p - 1$.
Then $f_i\ge e_i+1$ for $1\le i\le r_0$.
\end{cor}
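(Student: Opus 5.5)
By Corollary~\ref{propn 4.7}, the hypothesis $r_1<r_0+p-1$ gives $j(A_0)\subseteq pA_1$. Since $j$ is injective, $A_0$ embeds into $pA_1$, and the plan is to compare invariant factors along this embedding.

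First, recall the structure of $pA_1$: from the decomposition \eqref{A0 decomp},
\[
pA_1\simeq \bigoplus_{i:\,f_i\ge 2}\Z/p^{f_i-1}\Z,
\]
so the invariant factors of $pA_1$ are $f_1-1\ge f_2-1\ge\cdots$, where we keep only the positive ones.

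Second, use the standard fact about subgroups of finite abelian $p$-groups: if $B\subseteq C$ with $B\simeq\bigoplus \Z/p^{b_i}$ and $C\simeq\bigoplus\Z/p^{c_i}$, both ordered decreasingly, then $\rank_p B\le \rank_p C$ and $b_i\le c_i$ for every $i$. To prove it I would use the invariant
\[
\#\{i: b_i\ge k\}=\rank_p(p^{k-1}B)=\dim_{\F_p}\bigl(p^{k-1}B\bigr)[p].
\]
Since $p^{k-1}B\subseteq p^{k-1}C$, this count for $B$ is at most the corresponding count for $C$, for every $k\ge 1$. That inequality of counts for all $k$ is equivalent to $b_i\le c_i$ for all $i$.

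Applying this to $B=j(A_0)\simeq A_0$ and $C=pA_1$ gives $e_i\le f_i-1$ for $1\le i\le r_0$, with $r_0\le\#\{i: f_i\ge2\}$ guaranteeing that each $f_i-1$ used is a genuine positive invariant factor. This is exactly the claim $f_i\ge e_i+1$.

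The only step needing care is the subgroup-comparison lemma, in particular the reduction to counting $p$-torsion ranks of $p^{k-1}B$ via socles. It is elementary, so I expect no real obstacle.
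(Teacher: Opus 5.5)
Your proposal is correct and follows the paper's proof: deduce $j(A_0)\subseteq pA_1$ from Corollary~\ref{propn 4.7}, then compare the invariant factors of $A_0\simeq j(A_0)$ with those of $pA_1\simeq\bigoplus\Z/p^{f_i-1}\Z$. The only difference is that you spell out the standard subgroup-comparison lemma, via the monotone counts $\dim_{\F_p}(p^{k-1}B)[p]$, which the paper leaves as ``the result follows.''
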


\begin{proof} Since 
\[
pA_1\simeq \mathbb Z/p^{f_1-1}\mathbb Z\oplus \mathbb Z/p^{f_2-1}\mathbb Z\oplus \cdots \oplus \mathbb Z/p^{f_{r_1-1}}\mathbb Z
\]
contains $j(A_0)\simeq A_0$ as a subgroup, the result follows.
\end{proof}

The conclusion of Corollary~\ref{Cor 2 from LCW write up} occurs frequently in our computations.
\begin{itemize}
\item For $k_0=\mathbb Q(\sqrt{-302821})$, we have $A_0=\left(\mathbb Z/3\mathbb Z\right)^2$ and 
$A_1=\left(\mathbb Z/9\mathbb Z\right)^2$.
\item For $k_0=\mathbb Q(\sqrt{-303160})$, we have $A_0=\left(\mathbb Z/3\mathbb Z\right)^2$ and 
$A_1=\left(\mathbb Z/27\mathbb Z\right)^2\oplus \left(\mathbb Z/9\mathbb Z\right)$.
\item For $k_0=\mathbb Q(\sqrt{-306937})$, we have $A_0=\left(\mathbb Z/3\mathbb Z\right)^2$ and 
$A_1=\left(\mathbb Z/9\mathbb Z\right)^3$.
\item For $k_0=\mathbb Q(\sqrt{-303388})$, we have $A_0=\left(\mathbb Z/3\mathbb Z\right)^2$ and 
$A_1=\left(\mathbb Z/9\mathbb Z\right)\oplus \left(\mathbb Z/3\mathbb Z\right)^3$.
\end{itemize}

The last example shows that the result can fail if $\rank_p A_1 = \rank_p A_0+p-1$.

\begin{rmk}
\label{[3,3] to [9,3,3]}
{\it Corollary~\ref{Cor 2 from LCW write up} explains certain non-occurrences observed computationally.
When $A_0\simeq (\Z/3\Z)^2$ then $A_1 \not\simeq \Z/3^e\Z \times \Z/3\Z \times \Z/3\Z$ for any $e\geq 2$.
Such a group has $\rank_p A_1=3<4=\rank_p A_0+(p-1)$, so Corollary~\ref{Cor 2 from LCW write up} would force both cyclic factors of $A_0$ to gain a power of $3$, which is impossible for this proposed structure.
In particular, it rules out $A_1\simeq \Z/9\Z \times \Z/3\Z \times \Z/3\Z$, which we had first observed computationally.}
\end{rmk}

We return to this phenomenon in Section~\ref{sec: relative growth}, where we study the growth of the cyclic factors more systematically and obtain a quantitative statement for arbitrary rank growth.

\subsection{Exponent Stability}
We now study the case where the exponents of $A_0$ and $A_1$ are equal.
In the specific case that $A_0$ is $p$-elementary of rank $r_0$, we prove a criterion for $A_1$ to be a $p$-elementary group.

By the decomposition of $A_0$ in \eqref{A0 decomp}, we have $\exp(A_0)=p^{m_1}$.
Usually, the exponent of $A_1$ is larger than the exponent of $A_0$.
The following allows us to treat the case where these exponents are equal.
In particular, we show in this case  that
\[
\rank_{p}(\overline{j}(\overline{A}_0)) \geq \text{ number of cyclic factors of } A_0 \text{ having maximal order.}
\]

\begin{theorem}
\label{prop 4.8}
Let $\exp(A_0) = p^a$ and $s:=\#\{j: e_j = a\}$.
If $\exp A_1 = \exp A_0$, then
\[
r_1 \geq  r_0 + s(p-1).
\]
\end{theorem}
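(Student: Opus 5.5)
By Theorem~\ref{thm 1 LCW write-up}, $r_1 \ge r_0 + (p-1)\rank_p(\overline{j}(\overline{A}_0))$. So it suffices to show $\rank_p(\overline{j}(\overline{A}_0)) \ge s$ whenever $\exp A_1 = \exp A_0 = p^a$. Equivalently, the image of $j(A_0)$ in $\overline{A}_1 = A_1/pA_1$ must have dimension at least $s$. The plan is to locate inside $A_0$ a subgroup that is large modulo $p$ and cannot meet $pA_1$ in many directions.

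Write $A_0 = B \oplus C$, where $B \simeq (\Z/p^a\Z)^s$ collects the cyclic factors of maximal order and $C$ has exponent at most $p^{a-1}$. The key observation uses the exponent hypothesis. Since $\exp A_1 = p^a$, every element of $pA_1$ is killed by $p^{a-1}$. Hence $j(A_0)\cap pA_1 \subseteq j(A_0)[p^{a-1}] = j(A_0[p^{a-1}])$, using that $j$ is injective. Therefore the kernel of the composite $A_0 \to A_1 \to \overline{A}_1$ is contained in $A_0[p^{a-1}]$, and it also contains $pA_0$. This gives
\[
\dim \overline{j}(\overline{A}_0) = \dim_{\F_p} \frac{A_0}{\ker} \ge \dim_{\F_p}\frac{A_0}{A_0[p^{a-1}]+pA_0}.
\]

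The remaining step, which is the only real computation, is to evaluate the right-hand side. We have $A_0[p^{a-1}] = pB \oplus C$, because the elements of $\Z/p^a\Z$ killed by $p^{a-1}$ are exactly $p\Z/p^a\Z$, while $C$ is killed entirely. Hence $A_0[p^{a-1}] + pA_0 = pB \oplus C$. It follows that
\[
\frac{A_0}{A_0[p^{a-1}]+pA_0} \simeq B/pB \simeq (\Z/p\Z)^s,
\]
so the dimension is $s$. Combining this with Theorem~\ref{thm 1 LCW write-up} yields $r_1 \ge r_0 + s(p-1)$.

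The main point needing care is the deduction $j(A_0)\cap pA_1 \subseteq j(A_0[p^{a-1}])$. It uses the injectivity of $j$, so that the order of $j(x)$ equals the order of $x$, together with the bound on $\exp(pA_1)$. The case $a = 1$ fits the same argument: there $pA_1 = 0$, so $\overline{j}$ is injective and its image has dimension $r_0 = s$.
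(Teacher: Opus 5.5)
Your proof is correct and follows essentially the paper's argument. Both reduce via Theorem~\ref{thm 1 LCW write-up} to showing $\rank_p(\overline{j}(\overline{A}_0))\ge s$, and both rest on the same fact: $pA_1$ is killed by $p^{a-1}$ while the injective map $j$ preserves orders, so no element of order $p^a$ in $A_0$ can land in $pA_1$. The paper phrases this as linear independence of the images of the $s$ maximal-order generators, while you phrase it as the kernel bound $\ker\subseteq A_0[p^{a-1}]+pA_0$ plus a quotient computation, which carries the same content.
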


\begin{proof}
Since $e_1 \geq e_2 \geq \ldots \geq e_{r_0} \geq 1$, 
\[
o(h_1) = \cdots = o(h_s) = p^a,
\]
where $h_i$ are generators of $A_0$ with order $o(h_i) = p^{e_i}$.

\noindent \textit{Claim:} The elements $\overline{j}(h_i)$, for $1\leq i \leq s$, are linearly independent in $\overline{A_1}$.

\noindent \textit{Justification:}
If not, then there exist $s$ integers $\mu_i$, not all zero mod $p$,  such that 
\[
\mu_1 j(h_1) + \cdots + \mu_s j(h_s) \in pA_1.
\]
Let
\[
x = \mu_1 h_1 + \cdots + \mu_s h_s \in A_0.
\]
Moreover $o(x) = p^a$ since some $\mu_i\not\equiv 0$ mod $p$.
Since $j(x)\in pA_1$, there exists an element $y\in A_1$ such that $j(x)=py$.
By the injectivity of $j$, we know that 
\[
o(j(x)) = o(py) = p^a.
\]
But this would mean that $y\in A_1$ has order $p^{a+1}$.
This contradicts the assumption that $\exp(A_1)=p^a$.
This completes the justification.

It is straightforward to conclude from \eqref{13} that
$c\geq s$.
The result follows from Theorem~\ref{thm 1 LCW write-up}.
\end{proof}

\begin{rmk}
{\it Here is an alternative proof:  $A_0$ contains a summand $S=\left(\mathbb Z/p^a\mathbb Z\right)^s$, which injects into $A_1$, both of which have exponent $p^a$.
Every cyclic subgroup of $j(S)$ of order $p^a$ is generated by an element of maximal order in $A_1$ and hence splits off as a direct summand; see \cite[Lemma~13.1.9]{judson2020abstract}.
Applying this result $s$ times shows that $j(S)$ is a direct summand of $A_1$.
Therefore, $\overline{j}(S/pS)$ is a direct summand of $\overline{A}_1$ that is contained in $\overline{j}(\overline{A}_0)$.
Therefore, $s=\rank_p(S/pS)\le \rank_p(\overline{j}(\overline{A}_0))$.
Theorem~\ref{thm 1 LCW write-up} now yields the result.}
\end{rmk}

\begin{rmk}
Theorem \ref{prop 4.8} yields a weaker version of Corollary~\ref{Cor 2 from LCW write up}: If $r_1<r_0+p-1$, then $\exp(A_1)>\exp(A_0)$.
\end{rmk}

\begin{cor}
\label{3-4 strong}
Suppose that $A_0 \simeq (\Z/p^a\Z)^{r_0}$.
If $\exp(A_1)= \exp(A_0) =p^a$ then $r_1 =r_{0}p$.
\end{cor}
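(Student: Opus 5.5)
This follows by combining Theorem~\ref{prop 4.8} with the upper bound in Theorem~\ref{thm 1 LCW write-up}. Since $A_0\simeq(\Z/p^a\Z)^{r_0}$, every cyclic factor of $A_0$ has the maximal order $p^a$. In the notation of Theorem~\ref{prop 4.8} this means $e_i=a$ for all $i$, so $s=\#\{i: e_i=a\}=r_0$.

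We now apply Theorem~\ref{prop 4.8}. Its hypothesis $\exp(A_1)=\exp(A_0)$ is exactly what we assume, so it gives
\[
r_1\ \ge\ r_0+s(p-1)\ =\ r_0+r_0(p-1)\ =\ p\,r_0 .
\]
For the reverse inequality, the upper bound in Theorem~\ref{thm 1 LCW write-up} gives $r_1\le p\,r_0$ unconditionally. It comes from the decomposition \eqref{modp} of $\overline{A}_1$ into $r_0$ Jordan blocks, each of length at most $p$. Combining the two inequalities yields $r_1=p\,r_0$.

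We can also read this off the Jordan decomposition. The proof of Theorem~\ref{prop 4.8} shows that $c\ge s=r_0$. Since there are only $r_0$ blocks, $c=r_0$, so every block $\F_p[T]/\langle T^{n_i}\rangle$ has $n_i=p$, and therefore $r_1=\sum n_i=pr_0$.

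Nothing here is a real obstacle. The only point to check is that $s$ equals $r_0$ under the homogeneity hypothesis on $A_0$, which is immediate. All the substantive work, namely the linear independence of the $\overline{j}(h_i)$ and the identification of $c$ with the rank of $\overline{j}(\overline{A}_0)$, was already done in Theorem~\ref{prop 4.8} and Theorem~\ref{thm 1 LCW write-up}.
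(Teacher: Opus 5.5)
Your proposal is correct and is exactly the paper's argument: Theorem~\ref{prop 4.8} with $s=r_0$ gives $r_1\ge r_0+r_0(p-1)=pr_0$, and the unconditional bound $r_1\le pr_0$ from Theorem~\ref{thm 1 LCW write-up} gives equality. Your Jordan-block reading ($c\ge s=r_0$ forces every $n_i=p$) is just an equivalent restatement of the same two inequalities.
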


\begin{proof}
This follows from Theorem~\ref{prop 4.8} with $s=r_0$ and the fact that $r_1 = \rank_p A_1 \leq r_0p$.
\end{proof}

\begin{rmk}
{\it The converse is false when $a>1$ even when $r_0=1$.
For example, when $k_0=\Q(\sqrt{-3001057})$, we have $A_0\simeq \Z/9\Z$ and $A_1\simeq \Z/27\Z \times \Z/3\Z \times \Z/3\Z$.}
\end{rmk}

When $a=1$, more can be said.

\begin{theorem}
\label{p elem}
Suppose that $A_0$ is an elementary $p$-group of rank $r_0$.
Then $A_1$ is an elementary $p$-group if and only if $\rank_p A_1 = r_0p$.
\end{theorem}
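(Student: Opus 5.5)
One direction is immediate from what is already proved. If $A_1$ is elementary, then $\exp(A_1)=p=\exp(A_0)$. Since $A_0\simeq(\Z/p\Z)^{r_0}$, Corollary~\ref{3-4 strong} with $a=1$ gives $r_1=r_0p$.

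For the converse, assume $r_1=r_0p$. By \eqref{R as sum of ni}, $r_1=\sum_{i=1}^{r_0}n_i$ with each $n_i\le p$, so every $n_i=p$. Then $c=r_0$, and by \eqref{13} the map $\overline{j}:\overline{A}_0\to\overline{A}_1$ has image of dimension $r_0$. Since $A_0$ is elementary, $\overline{A}_0=A_0$, so $\overline{j}$ is injective on $A_0$. This means $j(A_0)\cap pA_1=0$.

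The remaining step is to show $pA_1=0$, and this is the main point. My plan is to use Proposition~\ref{Ker N = A1^{sigma -1}}(a), which gives $A_1^G=j(A_0)$. The submodule $pA_1$ is $G$-stable. If it were nonzero, it would be a nonzero finite $p$-group acted on by the $p$-group $G$, so it would have a nonzero $G$-fixed element. That element lies in $A_1^G\cap pA_1=j(A_0)\cap pA_1=0$, which is a contradiction. Hence $pA_1=0$ and $A_1$ is elementary.

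In this argument I only need to check that fixed points of a $p$-group acting on a nonzero finite $p$-group are nontrivial. This is the standard orbit-counting fact, so I expect no real obstacle.
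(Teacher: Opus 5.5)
Your proposal is correct and follows the paper's proof essentially step for step. The forward direction is Corollary~\ref{3-4 strong} with $a=1$; for the converse, you force all Jordan blocks to have length $p$, deduce $j(A_0)\cap pA_1=0$ from $c=r_0$, and then apply the nontrivial-fixed-point argument to the $G$-stable subgroup $pA_1$ using $A_1^G=j(A_0)$.
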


\begin{proof}
($\Rightarrow$)
This is Corollary~\ref{3-4 strong} with $a=1$.

($\Leftarrow$)
We know from \eqref{R as sum of ni} that 
\[
r_1 = \sum_{i=1}^{r_0} {n_i} = r_0p;
\]
thus each $n_i =p$.
We have observed previously that  $\overline{\Nm} = T^{p-1}$ on $\overline{A}_1$.
On each Jordan block $\F_p[T]/\langle T^p \rangle$, the image of $T^{p-1}$ is one-dimensional.
By \eqref{13} we know that $c=r_0$.
Thus $\dim_{\F_p} \overline{j}(\overline{A}_0) = r_0$.
Consider the natural surjective map
\[
(\Z/p\Z)^{r_0}  \simeq j(A_0) \longrightarrow \overline{j}(\overline{A}_0).
\]
By dimension counting, the map must be an isomorphism.
Thus the kernel is $j(A_0) \cap pA_1 =0$.

\textit{Claim: $pA_1$ is trivial, i.e., $A_1$ is an elementary $p$-group.}

\textit{Justification:}
Suppose that $pA_1$ is non-trivial.
Since multiplication by $p$ commutes with the $\sigma$ action, we know that $pA_1$ is stable under $\Gal(k_1/k_0) = \langle \sigma \rangle$; i.e., $\sigma(pA_1) = pA_1$.
Recall that (by orbit counting) when a $p$-group acts on a non-zero finite $p$-group, there is a non-trivial fixed point.
By definitions and Proposition~\ref{Ker N = A1^{sigma -1}} we know 
\[
(pA_1)^{\langle \sigma \rangle} = pA_1 \cap A_1^{\langle \sigma\rangle} = pA_1 \cap j(A_0) = 0,
\]
which is impossible.
This forces $pA_1$ to be trivial, as claimed.
\end{proof}

\begin{rmk}
{\it When $p=3$ and $k_0=\mathbb Q(\sqrt{-767755})$, we have $A_0\simeq \left(\mathbb Z/3\mathbb Z\right)^2$ and $A_1\simeq \left(\mathbb Z/3\mathbb Z\right)^6$.
We do not know any examples where $A_0\simeq \left(\mathbb Z/9\mathbb Z\right)^2$ and $A_1$ has exponent $9$.
Of course, such examples should be rare since  examples where $A_0$ has exponent 9 and rank 2 are rare, and combining this with the requirement that $A_1$ have rank 6 is very restrictive.}
\end{rmk}

\subsection{Growth of Cyclic Factors}
\label{sec: relative growth}

We attempt to understand the structure of $A_1$ as much as possible when there is $p$-rank growth from $k_0$ to $k_1$.
In particular, we show that at most $\lfloor (r_1-r_0)/(p-1) \rfloor$ many factors of $A_0$ fail to grow in $A_1$.

\begin{propn}
\label{prop growth cyclic factors}
Let $G, H$ be finite abelian $p$-groups with $H\subseteq G$.
Write
\[
H\simeq \bigoplus_{i=1}^r \mathbb Z/p^{e_i}\mathbb Z, \qquad G\simeq\bigoplus_{i=1}^s \mathbb Z/p^{f_i}\mathbb Z,
\]
with $e_1\ge e_2 \ge \dots$ and $f_1\ge f_2\ge \dots$.
Let $\overline{H}=H/pH$ and $\overline{G}=G/pG$, and let $\overline{j}: \overline{H}\to \overline{G}$.
Then 
\[
\rank_p (\operatorname{image}(\overline{j})) \ge \#\{ i\,\mid\, e_i=f_i\}.
\]
\end{propn}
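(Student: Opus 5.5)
The plan is to avoid choosing a basis of $H$ and instead compare the invariants of three nested groups. A proof that picks generators $h_i$ of $H$ and tries to show that the $\overline{j}(h_i)$ with $e_i=f_i$ are independent, as in the proof of Theorem~\ref{prop 4.8}, fails. For example, take $G=\langle g_1\rangle\oplus\langle g_2\rangle\simeq \Z/p^2\Z\oplus\Z/p\Z$ and $H=\langle pg_1\rangle\oplus\langle g_2\rangle$. Then $e=(1,1)$, $f=(2,1)$, and $pg_1+g_2,\ pg_1$ is also a basis of $H$; its second element lies in $pG$ although $e_2=f_2$.

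\textbf{Dominance lemma.} For finite abelian $p$-groups $X\subseteq Y$ with sorted invariants $x_1\ge x_2\ge\cdots$ and $y_1\ge y_2\ge \cdots$ (padded with zeros), we have $x_i\le y_i$ for all $i$. To prove it, note that $\#\{i : x_i>n\}=\dim_{\F_p}(p^nX)[p]$. Since $(p^nX)[p]\subseteq (p^nY)[p]$, we get $\#\{i: x_i>n\}\le \#\{i: y_i>n\}$ for every $n$, which is equivalent to $x_i\le y_i$ for all $i$.

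\textbf{Reduction.} The image of $\overline{j}$ is $(H+pG)/pG\simeq H/(H\cap pG)$. Put $K=H\cap pG$. Since $pH\subseteq K\subseteq H$, the quotient $H/K$ is elementary abelian, and
\[
\rank_p(\operatorname{image}\overline{j})=\dim_{\F_p} H/K = r-d,\qquad d:=\dim_{\F_p}K/pH .
\]
So it suffices to show that at least $t:=\#\{i: e_i=f_i\}$ of the ``drops'' from $H$ to $K$ occur.

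\textbf{Pinning down the invariants of $K$.} Let $k_1\ge k_2\ge\cdots$ be the invariants of $K$. Three applications of the dominance lemma give:
\begin{enumerate}
\item $K\subseteq H$ gives $k_i\le e_i$;
\item $pH\subseteq K$ gives $e_i-1\le k_i$, since $pH$ has invariants $e_i-1$ in the same sorted order;
\item $K\subseteq pG$ gives $k_i\le f_i-1$, since $pG$ has invariants $f_i-1$.
\end{enumerate}
From (1) and (2), each $k_i\in\{e_i-1,e_i\}$. Also $\sum_i (e_i-k_i)=\log_p\#(H/K)=r-d$, so exactly $r-d$ indices satisfy $k_i=e_i-1$. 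For each index with $e_i=f_i$, (3) forces $k_i\le e_i-1$, so $k_i=e_i-1$. Hence $r-d\ge t$, which is the claim.

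The main point needing care is that the lemma is applied to the sorted invariant sequences consistently. The invariants of $pH$ and $pG$ are $e_i-1$ and $f_i-1$ in the same order, possibly with zeros that are padded. With that, the indices in (1)--(3) refer to the same positions, and the pointwise comparison together with the count $\sum_i (e_i-k_i)=r-d$ goes through.
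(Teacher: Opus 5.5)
Your proof is correct and is the paper's argument: the paper also sets $B=pG\cap H$, identifies the image of $\overline{j}$ with $H/B$, writes the invariants of $B$ as $e_i-\delta_i$ with $\delta_i\in\{0,1\}$ because $pH\subseteq B\subseteq H$, and uses $B\subseteq pG$ to force $\delta_i=1$ whenever $e_i=f_i$. Your dominance lemma via $\dim_{\F_p}(p^nX)[p]$ supplies the justification for these sorted-invariant comparisons, which the paper uses without proof.
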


\begin{proof}
Let $B=pG\cap H$.
Then $H/B\simeq (H+pG)/pG\simeq \operatorname{image}(\overline{j})$, so $|H|/|B|=p^c$, where $c=\rank_p( \operatorname{image}(\overline{j}))$.

Since $pH\subseteq B\subseteq H$,  
\[
B\simeq \bigoplus_{i=1}^r \mathbb Z/p^{e_i-\delta_i}\mathbb Z,
\]
with $0\le \delta_i\le 1$ for each $i$ and $\sum_i \delta_i=c$.
Moreover, $B\subseteq pG$, so $e_i-\delta_i \le  f_i-1$ for all $i$.
Because $e_i\le f_i$ for all $i$, we have
\[
c=\#\{i\,\mid\, \delta_i=1\} \ge \#\{i\,\mid\, e_i=f_i\}. \qedhere
\]
\end{proof}

\begin{cor}
With the notation introduced in Section~\ref{sec: setup}, 
\[
\#\{ i\,\mid\, 1\le i\le r_0, e_i=f_i\} \le \frac{r_1-r_0}{p-1}.
\]
\end{cor}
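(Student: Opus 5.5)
The plan is to combine Proposition~\ref{prop growth cyclic factors} with the lower bound in Theorem~\ref{thm 1 LCW write-up}. Take $H = j(A_0)\subseteq G = A_1$. This is legitimate because $j$ is injective, so $j(A_0)\simeq A_0\simeq\bigoplus_{i=1}^{r_0}\Z/p^{e_i}\Z$. Write $A_1\simeq\bigoplus_{i=1}^{r_1}\Z/p^{f_i}\Z$ as in \eqref{A0 decomp}, with $r_0\le r_1$. With these choices the map $\overline{j}:\overline{H}\to\overline{G}$ of the proposition is exactly the map $\overline{j}:\overline{A}_0\to\overline{A}_1$ of Section~\ref{sec: setup}.

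The first step is to apply Proposition~\ref{prop growth cyclic factors}, which gives
\[
\rank_p(\overline{j}(\overline{A}_0)) \ge \#\{i \mid 1\le i\le r_0,\ e_i=f_i\}.
\]
Let $c=\rank_p(\overline{j}(\overline{A}_0))$. The second step is to use Theorem~\ref{thm 1 LCW write-up}, which says $r_0+(p-1)c\le r_1$, so $c\le (r_1-r_0)/(p-1)$. Chaining the two inequalities gives the claim.

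The only point needing care is that the proof of Proposition~\ref{prop growth cyclic factors} uses $e_i\le f_i$ for each $i\le r$ (sorted invariants). This holds for a subgroup of a finite abelian $p$-group: comparing $\#(p^kH)[p] \le \#(p^kG)[p]$ for every $k$ shows that, for each $k$, $H$ has at most as many cyclic factors of order $>p^k$ as $G$, and this yields $e_i\le f_i$. The paper already uses this fact implicitly, so I would cite or sketch it briefly. I expect no real obstacle.
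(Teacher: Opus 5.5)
Your proposal is correct and is exactly the paper's argument: apply Proposition~\ref{prop growth cyclic factors} to $H=j(A_0)\subseteq G=A_1$ to get $c=\rank_p(\overline{j}(\overline{A}_0))\ge\#\{i\mid e_i=f_i\}$, then combine this with the lower bound $r_0+(p-1)c\le r_1$ from Theorem~\ref{thm 1 LCW write-up}. Your side remark justifying $e_i\le f_i$ via $\#(p^kH)[p]\le\#(p^kG)[p]$ is also correct, and it supplies a fact that the proof of the proposition uses without comment.
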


\begin{proof}
Theorem~\ref{thm 1 LCW write-up} implies that $\rank_p(\overline{j}(\overline{A}_0)) \le (r_1-r_0)/(p-1)$.
The result follows from Proposition~\ref{prop growth cyclic factors}.
\end{proof}

\begin{theorem} 
With the notation introduced in Section~\ref{sec: setup},
\[
\rank_p(A_1/j(A_0)) \ge r_1-c \ge \frac{(p-2)r_1+r_0}{p-1}.
\]
\end{theorem}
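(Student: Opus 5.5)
Here $c=\rank_p(\overline{j}(\overline{A}_0))$, as in the proof of Theorem~\ref{thm 1 LCW write-up}. The plan is to prove the two inequalities separately.

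\emph{First inequality.} Put $Q=A_1/j(A_0)$. Then
\[
\overline{Q}=Q/pQ\simeq A_1/(pA_1+j(A_0))\simeq \overline{A}_1/\overline{j}(\overline{A}_0).
\]
The first isomorphism holds because $pQ$ is the image of $pA_1$ in $Q$. The second holds because the image of $j(A_0)$ in $\overline{A}_1=A_1/pA_1$ is exactly $\overline{j}(\overline{A}_0)$. Taking $\F_p$-dimensions gives
\[
\rank_p Q=\dim\overline{A}_1-\dim \overline{j}(\overline{A}_0)=r_1-c,
\]
which is in fact equality. It also agrees with the Jordan picture: $\overline{j}(\overline{A}_0)$ is the image of $T^{p-1}$, i.e.\ the socle lines of the $c$ blocks of length $p$.

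\emph{Second inequality.} The claim $r_1-c\ge \frac{(p-2)r_1+r_0}{p-1}$ is equivalent to
\[
(p-1)r_1-(p-1)c\ge (p-2)r_1+r_0,
\]
that is, to $r_1\ge r_0+(p-1)c$. This is exactly the lower bound of Theorem~\ref{thm 1 LCW write-up}, so the second inequality follows by rearranging.

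There is no real obstacle here. The only point needing care is the identification of $\overline{Q}$ with the cokernel of $\overline{j}$, and that is a routine third-isomorphism-theorem computation.
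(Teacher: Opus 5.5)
Your proof is correct, and it takes a more direct route than the paper's. The paper pads $e_i=0$ for $r_0<i\le r_1$ and asserts $\rank_p(A_1/j(A_0))\ge \#\{i : f_i\ne e_i\}$. It then bounds the right side below by $r_1-\frac{r_1-r_0}{p-1}$, using the corollary $\#\{i : e_i=f_i\}\le\frac{r_1-r_0}{p-1}$, which is Proposition~\ref{prop growth cyclic factors} combined with Theorem~\ref{thm 1 LCW write-up}. The middle term $r_1-c$ never explicitly appears in that chain.

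You instead identify $A_1/(pA_1+j(A_0))$ with $\overline{A}_1/\overline{j}(\overline{A}_0)$, which gives the exact equality $\rank_p(A_1/j(A_0))=r_1-c$. The second inequality is then just the lower bound of Theorem~\ref{thm 1 LCW write-up} rearranged, so you never need Proposition~\ref{prop growth cyclic factors}.

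Your route is also the sound one. Proposition~\ref{prop growth cyclic factors} gives $c\ge\#\{i : e_i=f_i\}$, so your identity yields $\rank_p(A_1/j(A_0))=r_1-c\le\#\{i : f_i\ne e_i\}$. The paper's first step therefore runs in the wrong direction in general. A concrete case is the paper's own example $k_0=\Q(\sqrt{-3001057})$, with $p=3$, $A_0\simeq\Z/9\Z$ and $A_1\simeq\Z/27\Z\oplus(\Z/3\Z)^2$. There $r_1=pr_0$ forces the single Jordan block to have length $p$, so $c=1$ and $\rank_p(A_1/j(A_0))=2$. Yet all three indices have $f_i\ne e_i$. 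The stated bound still holds, and your argument is the one that correctly establishes it.
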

\begin{proof} Let $e_i=0$ for $r_0< i\le r_1$.
Then 
\[
\rank_p(A_1/j(A_0)) \ge \#\{i\,\mid\, 1\le i\le r_1, f_i\ne e_i\}\ge r_1 - \frac{r_1-r_0}{p-1} = \frac{(p-2)r_1+r_0}{p-1}. \qedhere
\]
\end{proof}

For example, when $r_1<r_0+p-1$, we find that $\rank_p(A_1/j(A_0)) > r_1-1$, so $\rank_p(A_1/j(A_0)) = r_1=\rank_p(A_1)$.
Therefore,  every cyclic factor of $A_0$ must increase in $A_1$, as we saw in Corollary~\ref{Cor 2 from LCW write up}.

\begin{cor}
With notation as above, $\#A_1 \geq p^{r_1-c} \cdot \#A_0$.
\end{cor}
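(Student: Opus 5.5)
The plan is to compare orders through a chain of subgroups. We have $\#A_1 = \#j(A_0)\cdot \#\bigl(A_1/j(A_0)\bigr)$, and $j$ is injective, so $\#j(A_0)=\#A_0$. Hence it suffices to show
\[
\#\bigl(A_1/j(A_0)\bigr) \ge p^{r_1-c},
\]
where $c=\rank_p(\overline{j}(\overline{A}_0))$ is the number of Jordan blocks of length $p$, as in the proof of Theorem~\ref{thm 1 LCW write-up}.

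The key input is the previous theorem, which gives $\rank_p(A_1/j(A_0))\ge r_1-c$. A finite abelian $p$-group of $p$-rank $d$ has order at least $p^d$, so the desired bound follows immediately.

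To be safe, I would also check the inequality $\rank_p(A_1/j(A_0))\ge r_1-c$ directly, since the displayed proof of the previous theorem passes through $\#\{i : f_i\ne e_i\}$ rather than through $c$. The direct argument runs as follows. The quotient $A_1/j(A_0)$ surjects onto
\[
A_1/\bigl(j(A_0)+pA_1\bigr)\simeq \overline{A}_1/\overline{j}(\overline{A}_0).
\]
This is an $\F_p$-vector space of dimension $r_1-c$. Since $p$-rank cannot increase under taking quotients, we get $\rank_p(A_1/j(A_0))\ge r_1-c$.

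No real obstacle is expected. The only point needing care is identifying the image of $j(A_0)$ in $\overline{A}_1$ with $\overline{j}(\overline{A}_0)$, whose dimension is exactly $c$ by \eqref{13}.
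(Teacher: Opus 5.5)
Your proposal is correct and is essentially the paper's argument: the paper states this corollary without proof as an immediate consequence of the preceding theorem, via $\#A_1=\#A_0\cdot\#\bigl(A_1/j(A_0)\bigr)\ge \#A_0\cdot p^{\rank_p(A_1/j(A_0))}$. Your direct check of $\rank_p(A_1/j(A_0))\ge r_1-c$ is worth including. Since $\bigl(A_1/j(A_0)\bigr)/p\bigl(A_1/j(A_0)\bigr)\simeq A_1/\bigl(j(A_0)+pA_1\bigr)$, it in fact gives $\rank_p(A_1/j(A_0))=r_1-c$ exactly, whereas the paper's displayed intermediate bound $\rank_p(A_1/j(A_0))\ge\#\{i : f_i\ne e_i\}$ fails for arbitrary subgroups (take $H=\langle(p,1)\rangle$ in $\Z/p^2\Z\oplus\Z/p\Z$).
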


Since $c\le (r_1-r_0)/(p-1)$, we have 
\[
r_1-c \ge ((p-2)r_1+r_0 + r_0)/(p-1) \ge r_0.
\]
This result generalizes  Proposition~\ref{3-1}

\section{Group structure of \texorpdfstring{$A_1$}{} when the rank is stable}
\label{sec: 4}

The main result we prove in this section is the following.

\begin{theorem}
\label{thm 4.7}
Keep the notation introduced in Section~\ref{sec: setup}.
Suppose that $r_1 = r_0 =r$.
Then
\[
A_1 \simeq \Z/p^{e_1 + 1}\Z \oplus \ldots \Z/p^{e_r + 1}\Z.
\]
\end{theorem}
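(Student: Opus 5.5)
The plan is to identify $j(A_0)$ with $pA_1$; the structure of $A_1$ then follows by comparing cyclic factors. Throughout, write $T=\sigma-1$ as in the proof of Theorem~\ref{thm 1 LCW write-up}.

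\emph{Step 1: $T$ acts trivially mod $p$.} By \eqref{R as sum of ni} we have $r=r_1=\sum_{i=1}^{r_0}n_i$ with each $n_i\ge 1$ and $r_0=r$. Hence every $n_i=1$, so $T$ acts as zero on $\overline{A}_1$, i.e.\ $TA_1\subseteq pA_1$. Since $T$ commutes with multiplication by $p$, induction gives $T^kA_1\subseteq p^kA_1$ for all $k\ge 0$.

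\emph{Step 2: $\Nm$ agrees with $p$ modulo $p^2A_1$.} In $\Z[G]$ we have
\[
\Nm=\sum_{i=1}^{p}\binom{p}{i}T^{i-1}=p+\sum_{i=2}^{p-1}\binom{p}{i}T^{i-1}+T^{p-1}.
\]
Each middle coefficient is divisible by $p$, and $T^{i-1}A_1\subseteq pA_1$ for $i\ge 2$, so the middle terms map $A_1$ into $p^2A_1$. The last term maps $A_1$ into $p^{p-1}A_1\subseteq p^2A_1$, using that $p\ge 3$. Therefore $\Nm x-px\in p^2A_1$ for every $x\in A_1$. The same term-by-term estimate also shows $\Nm A_1\subseteq pA_1$.

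\emph{Step 3: a Nakayama-type argument.} Set $H=\Nm A_1=j(A_0)$; the equality holds because $\Nc$ is surjective and $\Nm=j\circ\Nc$. By Step 2, $H\subseteq pA_1$ and $pA_1\subseteq H+p(pA_1)$. Iterating the second inclusion gives $pA_1\subseteq H+p^k(pA_1)$ for every $k$, and since $A_1$ is a finite $p$-group this forces $pA_1=H$. Thus $pA_1=j(A_0)\cong A_0$, using the injectivity of $j$.

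\emph{Step 4: conclusion.} From $A_1\simeq\bigoplus_{i=1}^{r}\Z/p^{f_i}\Z$ we get $pA_1\simeq\bigoplus_{i=1}^{r}\Z/p^{f_i-1}\Z$. This group is isomorphic to $A_0$, which has rank $r$, so every $f_i\ge 2$. Comparing the ordered invariant factors then gives $f_i-1=e_i$ for each $i$, which is the claimed structure. As a consistency check, Corollary~\ref{Cor 2 from LCW write up} independently gives $f_i\ge e_i+1$ (it applies since $r_1=r_0<r_0+p-1$).

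The main obstacle is Step 2: one must verify that $T^{p-1}$ contributes nothing modulo $p^2A_1$. This is exactly where $p$ odd is used, and it rests on the iterated inclusion $T^kA_1\subseteq p^kA_1$ from Step 1.
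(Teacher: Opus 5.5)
Your proof is correct, and it runs along the dual side of the paper's argument.

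The paper uses $r_1=r_0$ only to get $j(A_0[p])=A_1[p]$, i.e.\ that $\sigma$ is trivial on $A_1[p]$. It then relies on the matrix lemma (Proposition~\ref{Prop 1}) to write $\sigma=1+p\psi$ with $\psi\in\operatorname{End}(A_1)$. This gives $\Nm=p(1+p\theta)$ with $1+p\theta$ invertible, hence $\ker\Nc=\ker\Nm=A_1[p]$ and $A_0\simeq A_1/A_1[p]$.

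You use $r_1=r_0$ on the cotorsion side instead. The count $\dim_{\F_p}\overline{A}_1/T\overline{A}_1=r_0=\dim_{\F_p}\overline{A}_1$ yields $TA_1\subseteq pA_1$ immediately. Since $T$ commutes with $p$, this iterates to $T^kA_1\subseteq p^kA_1$, which is all Step~2 needs. Nakayama then identifies the image $\Nm A_1=j(A_0)$ with $pA_1$. This gives the same identification $A_0\simeq pA_1\simeq A_1/A_1[p]$ without any computation in the endomorphism ring.

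This is more than a shortcut, because Proposition~\ref{Prop 1} is false as stated. Take $A=\Z/p^2\Z\oplus\Z/p\Z$ and $\sigma(a,b)=(a,\,b+(a\bmod p))$. Then $\sigma^p=1$ and $\sigma$ is trivial on $A[p]=p\Z/p^2\Z\oplus\Z/p\Z$, yet $(\sigma-1)(1,0)=(0,1)\notin pA$. The slip is in the last step of its proof. When $g_i=1$, the $(i,j)$ entry of the endomorphism identity $(1+M)^p-1=0$ is only a congruence modulo $p^{g_i}=p$, so it gives no information about $m_{ij}$.

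In the situation of Theorem~\ref{thm 4.7} the decomposition $\sigma=1+p\psi$ does hold, but only because one also has $TA_1\subseteq pA_1$ --- exactly your Step~1. That inclusion forces $p\mid m_{ij}$ for all $i,j$. The bound $\nu_p(m_{ij})\ge g_i-g_j+1$ for $g_i>g_j$ then shows that $M/p$ defines an endomorphism. So your route supplies precisely the input the paper's route needs.

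The conclusion $\ker\Nm=A[p]$ of Proposition~\ref{Prop 2} does remain true for $p$ odd. To see this, apply your Steps~2--3 to the Pontryagin dual of $A$, where triviality of $\sigma$ on $A[p]$ becomes triviality on the dual modulo $p$.
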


The proof occupies the remainder of this section.

By the assumption of rank stability, we know $j(A_0[p])=A_1[p]$, so the Galois action on $A_1[p]$ is trivial.
We first show that this forces $\sigma$ to be congruent to the identity modulo $p$ in the endomorphism ring of $A_1$, and then deduce that the kernel of group-ring norm is exactly $A_1[p]$.
Since the ideal norm $A_1 \to A_0$ is surjective, this identifies $A_0$ with $A_1/A_1[p]$, from which the exact structure of $A_1$ follows.

\begin{propn}
\label{Prop 1}
Let $p$ be an odd prime and let $A$ be a finite abelian $p$-group.
Let $\sigma$ be an automorphism of $A$ with $\sigma^p =1$ and such that $\sigma=1$ on $A[p]$.
Then there exists an endomorphism $\psi$ of $A$ such that
\[
\sigma = 1+p \psi.
\]
\end{propn}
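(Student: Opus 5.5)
The plan is to build $\psi$ one cyclic summand at a time. Write $A=\bigoplus_{i}\langle g_i\rangle$ with $g_i$ of order $p^{e_i}$, and set $D=\sigma-1\in\operatorname{End}(A)$. We want elements $y_i\in A$ with $D g_i=p y_i$ and $p^{e_i}y_i=0$. Then $\psi(g_i):=y_i$ extends to a well-defined endomorphism of $A$, and $p\psi=D$ holds on generators, hence everywhere.

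The order condition is the easy part. Suppose $D g_i=p y_i$ for some $y_i$. Then
\[
p^{e_i}y_i=p^{e_i-1}Dg_i=D\bigl(p^{e_i-1}g_i\bigr)=0,
\]
because $p^{e_i-1}g_i\in A[p]$ and $\sigma$ acts trivially on $A[p]$. So any choice of $y_i$ works.

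The real obstacle is the first step: we need $D(A)\subseteq pA$, that is, $\sigma$ acts trivially on $A/pA$. I do not think this follows from the stated hypotheses. Take $A=\langle h\rangle\oplus\langle g\rangle\simeq \Z/p^2\Z\oplus\Z/p\Z$ and define $\sigma(h)=h+g$, $\sigma(g)=g$. Then:
\begin{itemize}
\item $\sigma^p(h)=h+pg=h$, so $\sigma^p=1$;
\item $A[p]=\langle ph,g\rangle$ is fixed pointwise, since $\sigma(ph)=ph+pg=ph$;
\item but $Dh=g\notin pA$, so no $\psi$ with $\sigma=1+p\psi$ can exist.
\end{itemize}
So the proposition needs the additional hypothesis that $\sigma$ acts trivially on $A/pA$.

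In the intended application, Theorem~\ref{thm 4.7}, this extra hypothesis is available. By \eqref{R as sum of ni} in the proof of Theorem~\ref{thm 1 LCW write-up} we have $r_1=\sum_{i=1}^{r_0}n_i$ with every $n_i\ge1$. If $r_1=r_0$, this forces $n_i=1$ for all $i$, so $T=\sigma-1$ is zero on $\overline{A}_1$. Equivalently, $(\sigma-1)A_1\subseteq pA_1$.

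I would therefore state and prove the proposition with this added hypothesis, supplied by Theorem~\ref{thm 1 LCW write-up}. With it, a suitable $y_i$ exists for each $i$, and the previous paragraph completes the construction of $\psi$. The hypothesis $\sigma^p=1$ is not needed for this step.
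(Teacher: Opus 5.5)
Your counterexample is valid, so the proposition is false as stated. On $A=\Z/p^2\Z\oplus\Z/p\Z$ your $\sigma$ satisfies $\sigma^p=1$ and fixes $A[p]=\langle ph,g\rangle$ pointwise, yet $(\sigma-1)h=g\notin pA$.

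The paper's proof breaks at the step where the expansion of $(1+M)^p-1=0$ is used to conclude $p\mid m_{ij}$. That relation holds only as an identity of endomorphisms. So the $(i,j)$ entry of the integer matrix $(1+M)^p-1$ is known to be divisible by $p^{g_i}$, not by $p^2$, and when $g_i=1$ nothing follows. Your example sits exactly there. With $x_1=h$ and $x_2=g$, the only entry prime to $p$ is $m_{21}=1$, and its row exponent is $g_2=1$. Moreover $(1+M)^p-1=pM$ has $(2,1)$ entry $p$, which vanishes modulo $p^{g_2}$ but not modulo $p^2$. The paper's valuation argument does go through when every cyclic factor of $A$ has order at least $p^2$.

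Your repair takes a different and more elementary route. Instead of the minimal-gap bookkeeping and the binomial expansion of $\sigma^p=1$, you construct $\psi(g_i)=y_i$ directly from two conditions: $(\sigma-1)A\subseteq pA$ and $(\sigma-1)A[p]=0$. Together these are in fact equivalent to $\sigma-1\in p\operatorname{End}(A)$. This makes $\sigma^p=1$ unnecessary, at the price of the extra hypothesis on $A/pA$. Rank stability supplies that hypothesis for free: $r_1=r_0$ forces every $n_i=1$ in \eqref{R as sum of ni}, i.e.\ $T\overline{A}_1=0$. So Proposition~\ref{Prop 2} and Theorem~\ref{thm 4.7} remain valid with your version. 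The proof of Theorem~\ref{thm 4.7} just needs to record that $\sigma$ acts trivially on $\overline{A}_1$, not only on $A_1[p]$.
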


\begin{proof}
Write $A\simeq \bigoplus_{i=1}^r \Z/p^{g_i}\Z$ with $g _1 \geq \cdots \geq g_r\ge 1$.
Let
$x_i = (0, \ldots, 0, \overline{1}, 0, \ldots 0)^{\top}$ denote a generator of $A$ and note that $o(x_i) = p^{g_i}$.
Let $M=(m_{ij})$ denote the matrix corresponding to $\sigma -1$.
Then
\[
M(x_j) = \sum_{i=1}^r m_{ij}x_i
\]
We want to prove that every $m_{ij}$ is divisible by $p$, and that after dividing the matrix $M$ by $p$, we still get an endomorphism of $A$.

It is clear that $p^{g_j-1}x_j$ has order $p$ for each $j$.
Since $\sigma=1$ on $A[p]$, we observe that
\[
M(p^{g_j -1} x_j)= \sum_{i=1}^r p^{g_j-1} m_{ij}x_i =  0.
\] 
In particular, for each $i,j$ we must have that $p^{g_j-1} m_{ij}x_i=0$.
Since $o(x_i) = p^{g_i}$, it follows that $p^{g_i}\mid p^{g_j-1} m_{ij}$.
From this we can conclude that the $p$-adic valuation of $m_{ij}$ is as follows 
\begin{equation}
\label{1}
\nu_p(m_{ij})\geq \max\{0, g_i - g_j +1\}.
\end{equation}
In particular, the following three situations can arise:
\begin{equation}
\label{2}
\begin{cases}
\nu_p(m_{ij})\geq g_i - g_j +1 \geq 2 & \text{ if } g_i > g_j \\
p\mid m_{ij} & \text{ if } g_i = g_j \\
p\nmid m_{ij} & \text{ then } g_i < g_j.
\end{cases}
\end{equation}
\textit{Notice the direction of implication in the third case.}

Consider a pair $(i,j)$ such that $p\nmid m_{ij}$.
Then we know from \eqref{2} that $g_i < g_j$.
Suppose we choose $(i,j)$ to be such that the difference $d=g_j - g_i>0$ is minimal.

\noindent \textit{Claim:} For $2\leq k \leq p-1$, the following congruence condition holds
\[
(M^k)_{ij} \equiv 0 \pmod{p}.
\]

\noindent \textit{Justification:}
We write
\[
(M^k)_{ij} = \sum_{i_1, \ldots, i_{k-1}} m_{i i_{k-1}} m_{i_{k-1} i_{k-2}} \ldots m_{i_1 j}.
\]
(For the sake of consistency, we may write $i_k =i$ and $i_0=j$, when required.)
So a typical summand is 
\[
m_{i i_{k-1}} m_{i_{k-1} i_{k-2}} \ldots m_{i_1 j}.
\]
If this summand is not divisible by $p$, then each factor is not divisible by $p$.
In view of \eqref{2}, we have the following decreasing sequence of exponents
\[
g_j > g_{i_1} > \ldots > g_{i_{k-1}} > g_i.
\]
We assumed that $d=g_j - g_i$ is minimal; thus, such a decreasing sequence cannot exist.
Thus, each summand is divisible by $p$ and hence $(M^k)_{ij}$ is divisible by $p$.
This justifies the claim.

Next, we show that 
\begin{equation}
\label{3}
M^p \equiv 0 \pmod{p^2}.
\end{equation}
Therefore, $M^p\equiv 0 \pmod p$.
As before, we consider a summand
\[
S = m_{i_p i_{p-1}} m_{i_{p-1} i_{p-2}} \ldots m_{i_1 i_0}.
\]
Suppose that $p^2 \nmid S$.
Then $\nu_p(S)\leq 1$.
At most one factor arising in $S$ can be divisible by $p$.
Equivalently, at least $p-1$ of the other factors have $p$-valuation equal to $0$.
We know from \eqref{2} that every factor which is not divisible by $p$ forces the exponent to drop,
whereas a factor of $p$-valuation $1$ means that the exponent cannot increase.
Since $p\geq 3$, we get a contradiction since $g_j - g_i =d$ is minimal.

By assumption
\begin{equation}
\label{4}
(1+M)^p - 1 = pM + {p\choose 2}M^2 + \ldots + {p \choose p-1}M^{p-1} + M^p =0.
\end{equation}
For our choice of $(i,j)$, when $2\leq k \leq p-1$, we observe that
\[
p^2 \mid {p \choose k} (M^k)_{ij}.
\]
In addition, we also know that $p^2 \mid (M^p)_{ij}$.
By comparing with \eqref{4}, we conclude that $p\mid m_{ij}$.
This contradicts our choice of $(i,j)$.
In particular, for each pair $(i,j)$, we have shown that $p \mid m_{ij}$.

When $g_i > g_j$, the $(i,j)$-entry of $M$ has $p$-valuation $\geq g_i - g_j +1$.
Thus, $p^{g_i - g_j} \mid (m_{ij}/p)$.
On the other hand, when $g_i \leq g_j$, we have shown that $p\mid m_{ij}$.
Thus, the matrix $B=(m_{ij}/p)$ represents an endomorphism $\psi$ of $A$.
Indeed, an integer matrix $B=(b_{ij})$ represents an endomorphism of $A$ precisely when 
\[
p^{g_i - g_j}\mid b_{ij} \text{ for } g_i > g_j,
\]
(see \cite{hillar2007automorphisms}).
This follows from the fact that a homomorphism $\Z/p^{g_j}\Z \to \Z/p^{g_i}\Z$ sends $1 \mapsto b_{ij}$ and is well-defined precisely when $p^{g_i} \mid p^{g_j}b_{ij}$.

But $p\psi$ is nilpotent on $A_1$; hence $\sigma = 1+p\psi$ is invertible and is therefore an automorphism.
\end{proof}

\begin{propn}
\label{Prop 2}
Keep the notation and assumptions in Proposition~\ref{Prop 1}.
Let $\mathsf{N} = 1 + \sigma + \sigma^2 + \ldots + \sigma^{p-1}$.
Then the kernel of $\mathsf{N}$ is $A[p]$.
\end{propn}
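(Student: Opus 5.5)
By Proposition~\ref{Prop 1} we may write $\sigma = 1+p\psi$ with $\psi\in\operatorname{End}(A)$. The plan is to compute $\mathsf N$ as a polynomial in $p\psi$ and show that $\mathsf N = p\,u$ for an automorphism $u$ of $A$. Then $\ker \mathsf N = \ker(p\cdot) = A[p]$.

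The first step is the binomial expansion. Set $M = p\psi$, which commutes with everything in $\Z[\psi]$. Then
\[
\mathsf N=\sum_{k=0}^{p-1}(1+M)^k=\sum_{i=0}^{p-1}\binom{p}{i+1}M^{i}
= p + \binom{p}{2}M + \binom{p}{3}M^2+\cdots+\binom{p}{p-1}M^{p-2}+M^{p-1}.
\]
This uses the identity $\sum_{k=0}^{p-1}\binom{k}{i}=\binom{p}{i+1}$.

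The second step factors out $p$. For $1\le i\le p-2$, the coefficient $\binom{p}{i+1}$ is divisible by $p$, and $M^i = p^i\psi^i$ contributes at least one further factor of $p$. The last term is $M^{p-1}=p^{p-1}\psi^{p-1}$, and since $p\ge 3$ we have $p-1\ge 2$. Hence
\[
\mathsf N = p\bigl(1 + p\,\phi\bigr)
\]
for some $\phi\in\Z[\psi]\subseteq \operatorname{End}(A)$, where $\phi$ is an explicit integer polynomial in $\psi$. Concretely, $\phi=\frac{1}{p}\binom p2\psi + \cdots + p^{p-3}\psi^{p-1}$.

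The third step shows that $u=1+p\phi$ is an automorphism. Since $A$ is a finite $p$-group, $p\phi$ is nilpotent: $(p\phi)^n=p^n\phi^n=0$ once $p^n$ kills $A$. So $u$ is invertible, with inverse given by the finite geometric series. Therefore $\mathsf N(x)=0$ if and only if $p\,u(x)=u(px)=0$, which holds if and only if $px=0$.

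The only delicate point is the bookkeeping in the second step, namely making sure every term other than $p\cdot 1$ is divisible by $p^2$ as an endomorphism. The top term $M^{p-1}$ is where $p$ odd is used. Note that for $p=2$ this step fails, since then $\mathsf N=2+M$ and $M$ is divisible only by $2$. Everything else is formal.
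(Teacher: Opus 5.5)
Your proof is correct and follows the paper's argument. Both write $\sigma=1+p\psi$, show $\mathsf N=p(1+p\theta)$ with $1+p\theta$ an automorphism, and conclude $\ker\mathsf N=\ker(p)=A[p]$. The differences are minor: you expand $\mathsf N$ exactly via the hockey-stick identity, whereas the paper reduces $(1+p\psi)^t\equiv 1+tp\psi \pmod{p^2}$ and uses that $(p-1)/2$ is an integer; and you get invertibility from nilpotence of $p\theta$ rather than the paper's determinant-mod-$p$ remark.
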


\begin{proof}
We start by writing
\[
\mathsf{N} = 1 + \sigma + \sigma^2 + \ldots + \sigma^{p-1} = \sum_{t=0}^{p-1} (1+p\psi)^t.
\]
Since $(1+p\psi)^t \equiv 1+tp\psi \pmod{p^2}$ for all $t$, it follows immediately that
\[
\mathsf{N} \equiv p + p\psi \sum_{t=0}^{p-1} t \equiv p + p\psi \left( \frac{p(p-1)}{2}\right) \equiv p \pmod{p^2}.
\]
Equivalently, we write $\mathsf{N} = p(1+p\theta)$ where $\theta$ is an endomorphism of $A$.
But, the matrix corresponding to $1+p\theta$ has determinant $1\pmod{p}$; hence $1+p\theta$ is an automorphism.
Thus,
\[
\ker (\mathsf{N}) = \ker (p) = A[p]. \qedhere
\]
\end{proof}

We now have all the ingredients to prove the main result.

\begin{proof}[Proof of Theorem~\ref{thm 4.7}]
Recall that $j: A_0 \hookrightarrow A_1$ is the (injective) lifting map.
Further $j(A_0) = A_1^{\langle \sigma \rangle}$.
In particular, restricting to the $p$-torsion, we have
\[
j(A_0[p]) \subseteq A_1[p]^{\langle \sigma \rangle}.
\]
Here $\sigma$ is the generator of the cyclic group $\Gal(k_1/k_0)\simeq \Z/p\Z$.
But we are assuming that the rank at the base level and the first level are equal.
In particular,
\[
\# A_0[p] = \# A_1[p] = p^r.
\]
The injectivity of $j$ now forces that
\[
j(A_0[p]) = A_1[p].
\]
Since every element of $j(A_0[p])$ is fixed by $\sigma$, it follows that every element of $A_1[p]$ is also fixed by $\sigma$.
In particular, $\sigma=1$ on $A_1[p]$.

Now, we know that the ideal norm $\Nc: A_1 \to A_0$ is surjective.
Since $\Nm = j\circ \Nc$ and $j$ is injective we know that $\ker \Nm = \ker \Nc$.
By Proposition~\ref{Prop 2} we know that the kernel is equal to $A_1[p]$.
Since 
\[
A_0 \simeq A_1/ A_1[p] \simeq \bigoplus_{i=1}^r \Z/p^{f_i-1}\Z
\]
and the cyclic factors are unique, the proof is complete.
\end{proof}

\section{Determining \texorpdfstring{$\lambda$}{}-invariant from \texorpdfstring{$p$}{}-rank of \texorpdfstring{$A_1$}{}}
\label{sec: 5}

We now turn from the structure of the first-layer class group to the Iwasawa $\lambda$-invariant.
Since $\mu=0$ in the cyclotomic $\Z_p$-extension under consideration, the $p$-ranks of the class groups eventually stabilize and are bounded above by $\lambda$.
The natural question is how much of this asymptotic invariant is already visible at a finite layer.
We show that, as soon as the $p$-rank of $A_n$ is strictly smaller than $p^n$, the $p$-rank of $A_n$ is 
already equal to $\lambda$.
In particular, the first layer determines $\lambda$ if $\rank_p A_1<p$.

\begin{theorem}
\label{rank lambda equal no cyclic hypothesis}
Let $k_0$ be an imaginary quadratic field in which $p>2$ is non-split.
If $1 \leq \rank_{p} A_n < (r_0 -1)+ p^n$ then $\rank_p A_n = \lambda$.
\end{theorem}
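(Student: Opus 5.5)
The plan is to pass to the Iwasawa module and read off $\rank_p A_n$ from the Jordan structure of $X/pX$ under $T$, in the same spirit as the proof of Theorem~\ref{thm 1 LCW write-up}, but now at the level of the whole tower.

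\textbf{Setup.} Let $X=\varprojlim A_m$ be the inverse limit of the $A_m$ along the norm maps, viewed as a module over $\Lambda=\Z_p[[T]]$ with $\gamma = 1+T$ for a topological generator $\gamma$ of $\Gal(k_\infty/k_0)$. Since $p$ is non-split in $k_0$, there is exactly one prime of $k_0$ above $p$, and it is totally ramified in $k_\infty/k_0$. The standard argument (\cite[Proposition~13.22]{washington_book}) therefore gives
\[
A_m\simeq X/\omega_m X,\qquad \omega_m=(1+T)^{p^m}-1 .
\]
Since $\mu=0$, $X$ is a finitely generated $\Z_p$-module of rank $\lambda$. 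I will invoke the known fact that, for an imaginary quadratic field with a unique prime above $p$, $X$ has no nonzero finite $\Lambda$-submodules. Consequently $X\simeq\Z_p^{\lambda}$ as a $\Z_p$-module, and $\dim_{\F_p}X/pX=\lambda$.

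\textbf{Reduction mod $p$.} Since $\omega_m\equiv T^{p^m}\pmod p$, we get
\[
\overline{A}_m\simeq X/(p,\omega_m)X\simeq \overline{X}/T^{p^m}\overline{X},\qquad \overline{X}=X/pX .
\]
Now $\overline{X}$ is a $\lambda$-dimensional $\F_p$-space on which $T$ acts nilpotently, because $\Lambda$-modules are compact and $T$ is topologically nilpotent. Hence
\[
\overline{X}\simeq\bigoplus_{i=1}^{t}\F_p[T]/\langle T^{m_i}\rangle,\qquad \sum_i m_i=\lambda .
\]
The number of blocks is $t=\dim \overline{X}/T\overline{X}=\dim\overline{A}_0=r_0$. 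It follows that
\[
\rank_p A_n=\sum_{i=1}^{r_0}\min(m_i,p^n).
\]

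\textbf{Conclusion.} The hypothesis $\rank_p A_n\ge 1$ forces $A_n\neq 0$, hence $X\ne 0$ and $r_0\ge 1$. Suppose some block has $m_{i_0}>p^n$. Then that block contributes $p^n$, and each of the other $r_0-1$ blocks contributes at least $1$. This gives $\rank_p A_n\ge p^n+r_0-1$, contradicting the hypothesis. Therefore every $m_i\le p^n$, and so
\[
\rank_p A_n=\sum_i m_i=\lambda .
\]

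\textbf{Main obstacle.} The step that needs the most care is $\dim X/pX=\lambda$, i.e.\ the absence of $p$-torsion in $X$. Without it, the argument above only yields $\rank_p A_n=\dim\overline{X}\ge\lambda$. I would cite the absence of finite submodules (Greenberg's result, valid since the unique prime above $p$ is totally ramified and $k_0$ is imaginary quadratic).

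If that citation is unavailable, a fallback is as follows. A finite submodule $F$ would persist in $\overline{X}$ and inflate ranks for all large $m$. One could then compare with the stabilization of $\rank_p A_m$ via \cite[Theorem~1(b)]{fukuda} and the Nakayama argument mentioned in the introduction, which identifies the stable rank with $\lambda$. I expect the citation route to be cleaner.
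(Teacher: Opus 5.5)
Your proof is correct and follows the paper's argument essentially step for step. Like the paper, you use $X\simeq\Z_p^{\lambda}$ (from the absence of nonzero finite submodules), decompose $\overline{X}$ into $r_0$ Jordan blocks via $X/TX\simeq A_0$, identify $\overline{A}_n\simeq\overline{X}/T^{p^n}\overline{X}$ to get $\rank_p A_n=\sum_i\min(m_i,p^n)$, and use the hypothesis to rule out a block longer than $p^n$; the only cosmetic difference is that you conclude $m_i\le p^n$ where the paper states $a_i<p^n$, and either suffices (your fallback via Fukuda is not needed).
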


\begin{proof}
Let $X={\varprojlim_n} A_n$ be the Iwasawa module of the cyclotomic $\mathbb Z_p$-extension of $k_0$.
Since $\mu (X)=0$, we know $\rank_{\Z_p} X=\lambda $.
Since $X$ has no non-zero finite submodules we know that $X$ is a free $\Z_p$-module and $X\simeq \Z_p^\lambda$.
Thus $\dim_{\F_p} X/pX = \lambda$.
We may view $\overline{X}$ as an $\F_p [T]$-module.
In particular
\[
\overline{X} \simeq \bigoplus_{i=1}^{r_0} \frac{\F_p [T] }{\langle T^{a_i}\rangle}.
\]
The direct sum indeed has $r_0$ components because $X/TX \simeq A_0$ and $\dim_{\F_p}(\overline{X}/T\overline{X}) = \rank_p A_0=r_0$.
We conclude that
\[
\dim_{\F_p}(\overline{X}) = \lambda = \sum_{i=1}^{r_0} {a_i}.
\]
Recall that $A_n \simeq X/((1+T)^{p^n}-1)X$ and that $(1+T)^{p^n}-1\equiv T^{p^n} \pmod p$.
In the following, the first isomorphism is standard and the second is a consequence of the aforementioned structure theorem
\[
A_n/pA_n \simeq \overline{X}/T^{p^n} \overline{X} \simeq \bigoplus_{i=1}^{r_0} \frac{\F_p [T]}{\langle T^{\min\{a_i, p^n\}}\rangle}.
\]
Thus, we note
\[
\rank_{p} A_n = \sum_{i=1}^{r_0} \min \{a_i, p^n\}.
\]
We now invoke our hypothesis that $r_n<r_0 + p^n - 1$.
It is immediate that each $a_i<p^n$ and 
\[
r_n = \dim_{\F_p} A_n = \sum_{i} a_i =\lambda. \qedhere
\]
\end{proof}

An immediate corollary of the above assertion is the following

\begin{cor}
\label{cor 5-7}
Suppose that $A_0$ is cyclic.
Then $\rank_p A_1 = \min\{\lambda,p\}$.
Consequently,
\begin{align*}
\rank_p A_1 = d  &\Longleftrightarrow \lambda=d, \text{ when } 1\leq d <p, \\
\rank_p A_1=p &\Longleftrightarrow  \lambda \ge p.
\end{align*}
\end{cor}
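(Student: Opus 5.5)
The plan is to reuse the decomposition from the proof of Theorem~\ref{rank lambda equal no cyclic hypothesis}, specialized to $r_0=1$. Let $X=\varprojlim_n A_n$. Since $\mu=0$ and $X$ has no nonzero finite submodules, $X\simeq\Z_p^\lambda$, so $\dim_{\F_p}\overline{X}=\lambda$. Also $\overline{X}/T\overline{X}\simeq \overline{A}_0$ has dimension $r_0=1$. Hence, as an $\F_p[T]$-module, $\overline{X}\simeq \F_p[T]/\langle T^{a}\rangle$ is a single Jordan block, and comparing dimensions gives $a=\lambda$.

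Next, I will use $A_1\simeq X/((1+T)^p-1)X$ and $(1+T)^p-1\equiv T^p \pmod p$ to get
\[
\overline{A}_1\simeq \overline{X}/T^p\overline{X}\simeq \F_p[T]/\langle T^{\min\{\lambda,p\}}\rangle .
\]
This is exactly the computation displayed in the proof of Theorem~\ref{rank lambda equal no cyclic hypothesis} with $n=1$. It yields $\rank_p A_1=\min\{\lambda,p\}$ directly, with no hypothesis on $r_1$. So I would not pass through the inequality $r_1<r_0+p^n-1$ at all; with $r_0=1$ that inequality would only cover $r_1<p$. The one point I would double-check is that the isomorphism $A_n\simeq X/\omega_nX$ holds here. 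This is standard because $p$ is non-split, so there is a unique prime above $p$ and it is totally ramified from $k_0$ onward; this is the same input the earlier theorem already uses. Note that $A_0$ is nontrivial because it is cyclic of rank $1$, so $\lambda\ge1$.

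The equivalences then follow formally from $r_1=\min\{\lambda,p\}$. If $1\le d<p$, then $\min\{\lambda,p\}=d$ holds exactly when $\lambda=d$. Likewise, $\min\{\lambda,p\}=p$ holds exactly when $\lambda\ge p$.

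There is no real obstacle. The only care needed is to observe that the single-block structure removes the rank hypothesis of the preceding theorem.
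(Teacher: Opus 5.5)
Your proposal is correct and follows the paper, which gives no separate proof and calls this an immediate corollary of Theorem~\ref{rank lambda equal no cyclic hypothesis}. You specialize that theorem's proof to a single Jordan block, $\rank_p A_1=\min\{a_1,p\}$ with $a_1=\lambda$. Reading the result off this formula, rather than off the theorem's statement, also covers the case $r_1=p$ directly. Using the statement alone would leave that case to be handled separately via $r_1\le pr_0=p$ and $r_1\le\lambda$.
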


\section{Compatibility of Heuristics}
\label{sec: 6}

\subsection{Compatibility of Heuristics}
Using the Cohen--Lenstra--Martinet philosophy (as discussed in \cite{kw_rms}) we provide heuristics for $\Pr( \rank_p A_1 = d \mid \rank_p A_0 =1)$.
As in \cite{kw_rms}, we look at $A_1$ as a module over $\Z[\sigma]$ and weight each possible module $M$ by $\abs{\Aut_{\Z[\sigma]}(M)}^{-1}$.
Using this weighting, we obtain the following.

\begin{heur}
\label{heuristics gen}
Fix an odd prime $p$.
Let $k_0$ denote an imaginary quadratic field with $p$ non-split and let $k_1$ be the first layer of the cyclotomic $\Z_p$-extension of $k_0$.
Then
\[
\Pr(\rank_p A_1 = d \mid \rank_p A_0=1) = \begin{cases}
    \frac{p-1}{p^d} & \text{ if } 1\leq d \leq p-1 \\
    \frac{1}{p^{p-1}} & \text{ if } d= p\\
    0 & \text{ if } d>p.
\end{cases}
\]
\end{heur}

We give the derivation below.
Using Corollary~\ref{cor 5-7}, we immediately obtain the following.
\begin{heur}
With the same assumptions as in the previous heuristic, let $\lambda$ be the Iwasawa invariant for the cyclotomic $\mathbb Z_p$-extension of $k_0$.
Then
\begin{align*}
\Pr(\lambda = d \mid \rank_p A_0=1) &=    \frac{p-1}{p^d} \quad  \text{ if } 1\leq d \leq p-1 \\
\Pr(\lambda \ge p \mid \rank_p A_0 =1) &=    \frac{1}{p^{p-1}}.
\end{align*}
\end{heur}

Assume $A_0$ is non-trivial cyclic.
Let $F(T)$ be a characteristic power series of $X$.
For definiteness, we could choose $F$ to be the power series  that gives the $p$-adic $L$-function for the Dirichlet character of $k_0$ twisted by the Teichm\"uller character $\omega$ but this choice does not matter.
Then
\[
F(T)= a_0 + a_1T+a_2T^2+\cdots \in \Z\llbracket T\rrbracket.
\]
We know that $F(0)=a_0$ differs from the class number of $k_0$ by a $p$-adic unit, so $p\mid a_0$.
Also, $\lambda$ is the smallest $i$ such that $p\nmid a_i$.
Suppose we regard $F$ as a random power series where the probability is $1/p$, that $p$ divides a given coefficient $a_i$ with $i\ge 1$.
Then, when $1\le d\le p-1$, 
\[
\Pr(\lambda=d \, \mid \, \rank A_0 = 1) = \left(\frac{1}{p}\right)^{d-1}\left(\frac{p-1}{p}\right),
\]
and
\[
\Pr(\lambda\ge p \, \mid \, \rank A_0=1) = \frac{1}{p^{p-1}}.
\]
In other words, divisibility of the constant term $a_0$  by $p$ is determined by the Cohen--Lenstra heuristics.
But the other coefficients follow a more naive model.
The fact that the heuristics for the rank of $A_1$ matched the naive heuristics
for $\lambda$ when $A_0$ is cyclic is what led us to Theorem~\ref{rank lambda equal no cyclic hypothesis}.

The naive heuristics in \cite{kw_rms} differ from those of \cite{EJV} when the rank of $A_0$ is greater than 1.
It would be interesting to treat this situation and see which heuristic gives what appears to give the right answer.
However, analyzing rank-two modules is more difficult.

We now give the derivation of Heuristic~\ref{heuristics gen}, which occupies the remainder of the section.

Fix $A_0 \simeq \Z/p^m\Z$ where $m\geq 1$.
Then $A_1\simeq \Z[\sigma]/I$ for some ideal $I$ of $\Z[\sigma]$.
Each such $I$ has the form 
\[
I_{r,j}=((\sigma-1)^r+jp^{m-1}),\quad \text{ where } r\ge 1, 0\le j\le p-1,
\]
(see \cite[Section~6]{kw_rms}).
The ideals that occur in our situation are those with $j\ne 0$.
If we write $r=(p-1)s+t$ with $0\le t\le p-2$, we get the groups in Table~\ref{table 1}.
The congruence condition for $j$ in the table is modulo $p$.

\begin{center}
\begin{table}[h!]
\begin{tabular}{|c|c|c|}
\hline & group structure & $p$-rank \\
\hline
$m < s$ & $\left(\mathbb Z/{p^{m-1}}\right) \times \left(\mathbb Z/{p^{s+1}}\right)^{t+1}\times 
\left(\mathbb Z/{p^s}\right)^{p-t-2}$ & $p-1$ if $m=1$; $p$ if $m\ge 2$\\
\hline
$m>s$, $t\ne 0$ & $\left(\mathbb Z/{p^{m+1}}\right) \times \left(\mathbb Z/{p^{s+1}}\right)^{t-1}\times 
\left(\mathbb Z/{p^s}\right)^{p-t}$ & $t$ if $s=0$; $p$ if $s\ge 1$\\
\hline
$m>s$, $t=0$ & $\left(\mathbb Z/{p^{m+1}}\right) \times 
\left(\mathbb Z/{p^s}\right)^{p-2}\times \left(\mathbb Z/{p^{s-1}}\right)$ & $p-1$ if $s=1$; $p$ if $s\ge 2$\\
\hline
$m =s$, $t\ne 0$ & $\left(\mathbb Z/{p^{m-1}}\right) \times \left(\mathbb Z/{p^{m+1}}\right)^{t+1}\times 
\left(\mathbb Z/{p^m}\right)^{p-t-2}$ & $p-1$ if $m=1$; $p$ if $m\ge 2$\\
\hline
$m=s$, $t=0$, $j\not \equiv (-1)^m$ & $\left(\mathbb Z/{p^{m+1}}\right) \times \left(\mathbb Z/{p^{m-1}}\right) \times \left(\mathbb Z/{p^m}\right)^{p-2}$  & $p-1$ if $m=1$; $p$ if $m\ge 2$\\
\hline
$m=s$, $t=0$, $j\equiv (-1)^m$ & $  \left(\mathbb Z/{p^m}\right)^{p}$ &$p$\\
\hline
\end{tabular}
\medskip
\caption{The possible group structures for $A_1$.}
\label{table 1}
\end{table}
\end{center}
Let  $M_{r,j} = \Z_p[\sigma]/I_{r,j}$ where $r\geq 1$ and $1\leq j \leq p-1$.
It was proved in \cite{kw_rms} that $\# M_{r,j} = p^{m+r}$ and $\# \Aut_{\Z_p[\sigma]}(M_{r,j}) = (p-1)p^{m+r-1}$.
In view of the Cohen--Lenstra--Martinet philosophy, every pair $(r,j)$ has weight
\[
\Wt(M_{r,j}) = \frac{1}{\# \Aut_{\Z_p[\sigma]}(M_{r,j})} = \frac{1}{(p-1)p^{m+r-1}}.
\]
Since there are $p-1$ possible values of $j$ for each choice of $r$, the total weight corresponding to a fixed $r$ is given by
\[
\Wt^{(m)}_r = \sum_{j=1}^{p-1} \Wt(M_{r,j}) = \frac{1}{p^{m+r-1}}.
\]
Let 
\[
\mathcal{S}_m = \{M \mid M \text{ is a possibility for }A_1 \text{ when } A_0 \simeq \Z/p^m\Z\}.
\]
It was shown in \cite{kw_rms} that the total weight of all  possibilities for $A_1$ is
\begin{equation}
\label{all}
\Wt^{(m)}_{\operatorname{all}} = \sum_{M\in \mathcal{S}_m} \Wt(M) = \frac{1}{(p-1)p^{m-1}}.
\end{equation}
Thus, the probability attached to all $p-1$ ideals with given parameter $r$ is 
\begin{equation}
\label{parameter r}
\frac{\Wt^{(m)}_r}{\Wt^{(m)}_{\operatorname{all}}} = \frac{1/p^{m+r-1}}{1/(p-1)p^{m-1}} = \frac{p-1}{p^r}.
\end{equation}
Note that $\sum_{r\geq 1} (p-1)/p^r = 1$, as it should be.

For each $d$ and $m$, we want to compute the probability that $A_1$ has rank $d$, given that $A_0\simeq \Z/p^m\Z$.
This breaks down into cases.

{\bf Case I:} $1\le d\le p-2$

The only way this can happen is when $s=0$ and $t\ne 0$.
This is equivalent to $r=t$, so we obtain
\[
\Pr(\rank_p A_1 = d \mid A_0 \simeq \Z/p^m\Z)=\frac{\Wt^{(m)}_r}{\Wt^{(m)}_{\operatorname{all}}} = \frac{p-1}{p^d} \text{ when } 1\leq d \leq p-2.
\]
Note that this is independent of $m$, so we obtain
\[
\Pr(\rank_p A_1 = d \mid A_0 \text{ is cyclic }\ne 0) = \frac{p-1}{p^d}.
\]

{\bf Case II:} $d=p-1, m\ge 2$.

This happens when $s=1, t - 0$, so $r=p-1$, and 
\[
\Pr(\rank_p A_1 = p-1 \mid\ A_0 \text{ is cyclic }\ne 0) = \frac{p-1}{p^{p-1}}.
\]

{\bf Case III:} $d=p, m\ge 2$

We have 
\begin{align*}
\Pr(\rank_p A_1 = p \mid A_0 \simeq \Z/p^m\Z) & = 1 -  \sum_{d=1}^{p-1} \Pr(\rank_p A_1 = d \mid A_0 \simeq \Z/p^m\Z) \\
& = 1 - \sum_{d=1}^{p-1} \frac{p-1}{p^d}\\
& = 1 - \left( 1 - \frac{1}{p^{p-1}}\right) = \frac{1}{p^{p-1}}.
\end{align*}

{\bf Case IV:} $d=p, m=1$

This happens when $m=s=1, t=0, j\equiv -1$.

In this case, we emphasize that the contribution is from one value of $j$ only.
Thus,
\[
\Pr(\rank_p A_1 = p \mid A_0 \simeq \Z/p\Z) = \frac{\Wt(M_{p-1,p-1})}{\Wt^{(m)}_{\operatorname{all}}} = \frac{1/((p-1)p^{p-1})}{1/(p-1)} = \frac{1}{p^{p-1}}.
\]
 
{\bf Case V:} $d=p-1, m=1$

\begin{align*}
\Pr(\rank_p A_1 = p-1 \mid A_0 \simeq \Z/p\Z) & = 1 -  \sum_{d\ne p-1} \Pr(\rank_p A_1 = d \mid A_0 \simeq \Z/p\Z) \\
& = 1 - \sum_{d=1}^{p-2} \frac{p-1}{p^d} - \frac{1}{p^{p-1}}\\
& = 1 - \left( 1 - \frac{1}{p^{p-1}}\right) = \frac{1}{p^{p-1}}\\
&=\frac{p-1}{p^{p-1}}.
\end{align*}
The conditional probability we calculated in each case was independent of $m$, so averaging over the possible orders  $p^m$ of the non-trivial cyclic group $A_0$ does not change the answer.
This yields the desired heuristics.

\bibliographystyle{amsalpha}
\bibliography{reference}

@article{chevalley,
  title={Sur la th{\'e}orie du corps de classes dans les corps de nombres algebrique et dans les corps locaux},
  author={Chevalley, Claude},
  journal={J. Fac. Sci. Tokyo Imper. Univ., Sec. I-2},
  pages={363--476},
  year={1933}
}

@book{judson2020abstract,
  title={Abstract Algebra: Theory and Applications},
  author={Judson, Thomas W.},
  year={2025},
  publisher={Stephen F. Austin State University},
  note={https://judsonbooks.org/aata-files/aata-html/struct-section-finite-abelian-groups.html}
}

@article{ferrero-washington,
  title={The {I}wasawa invariant $\mu_p$ vanishes for abelian number fields},
  author={Ferrero, Bruce and Washington, Lawrence C},
  journal={Annals of Mathematics},
  volume={109},
  number={2},
  pages={377--395},
  year={1979},
  publisher={JSTOR}
}

@article{EJV,
  title={Modeling $\lambda$-invariants by $p$-adic random matrices},
  author={Ellenberg, Jordan S. and Jain, Sonal and Venkatesh, Akshay},
  journal={Comm. Pure Appl. Math.},
  volume={64},
  number={9},
  pages={1243--1262},
  year={2011},
  publisher={Wiley Online Library}
}

@article{hillar2007automorphisms,
  title={Automorphisms of finite abelian groups},
  author={Hillar, Christopher J. and Rhea, Darren L.},
  journal={The American Mathematical Monthly},
  volume={114},
  number={10},
  pages={917--923},
  year={2007},
  publisher={Taylor \& Francis}
}

@article{fukuda,
  title={Remarks on $\mathbb{Z}_p$-extensions of number fields},
  author={Fukuda, Takashi},
  journal={Proc. Japan Acad. Ser. A Math. Sci.},
  volume={70},
  pages={264--266},
  year={1994}
}

@article{kw_rms,
  title={The first level of $\mathbb{Z}_p$-extensions and compatibility of heuristics},
  author={Kundu, Debanjana and Washington, Lawrence C.},
  journal={Research in the Mathematical Sciences},
  volume={12},
  number={4},
  year={2025},
  publisher={Springer}
}

@article{sands,
  title={On small {I}wasawa invariants and imaginary quadratic fields},
  author={Sands, Jonathan W.},
  journal={Proceedings of the American Mathematical Society},
  volume={112},
  number={3},
  pages={671--684},
  year={1991}
}

@book {washington_book,
    AUTHOR = {Washington, Lawrence C.},
     TITLE = {Introduction to cyclotomic fields},
    SERIES = {Graduate Texts in Mathematics},
    VOLUME = {83},
   EDITION = {Second},
 PUBLISHER = {Springer-Verlag, New York},
      YEAR = {1997},
     PAGES = {xiv+487},
      ISBN = {0-387-94762-0},
       DOI = {10.1007/978-1-4612-1934-7},
       URL = {https://doi.org/10.1007/978-1-4612-1934-7},
}

\end{document}